\newtheorem{theorem}{Theorem}
\newtheorem{lemma}[theorem]{Lemma}
\begin{document}
			
		\title{Multicoloring of Graphs to Secure a Secret}
		

		\author{Tanja Vojkovi\'c (1) \footnote{Corresponding author: tanja@pmfst.hr}, Damir Vuki\v{c}evi\'c (1), Vinko Zlati\'c (2,3)}

			
		\maketitle
		
		\noindent (1) Department of Mathematics, Faculty of Science, Split, Croatia\\
		(2) Ru\dj er Bo\v{s}kovi\'c Institute, Zagreb, Croatia\\
		(3) CNR Institute for Complex Systems, via dei Taurini 19, 00185, Rome, Italy\\

		\begin{abstract}
			Vertex coloring and multicoloring of graphs are a well known subject in graph theory, as well as their applications. In vertex multicoloring, each vertex is assigned some subset of a given set of colors. Here we propose a new kind of vertex multicoloring, motivated by the situation of sharing a secret and securing it from the actions of some number of attackers. We name the multicoloring a highly $a$-resistant vertex $k$-multicoloring, where $a$ is the number of the attackers, and $k$ the number of colors. For small values $a$ we determine what is the minimal number of vertices a graph must have in order to allow such a coloring, and what is the minimal number of colors needed.
		\end{abstract}
	
	Keywords:
		graph theory, graph coloring, multicoloring, secret sharing
		
		05C82, 05C15, 68R10, 94A62 

\section{Introduction}
\label{}

Vertex multicolorings are generalizations of ordinary graph colorings in which every vertex is assigned a set of colors instead of one color. A proper multicoloring, just as the proper coloring, means that adjacent vertices cannot have the same color, so the sets of colors of adjacent vertices are disjoint. As in the ordinary colorings, the main problem in multicoloring is to minimize the number of colors used, but other problems and objectives have also been explored \cite{M1,M2,M3,M4}.
Applications of coloring and multicoloring of graphs are many and well known, from map colorings and Sudoku, to scheduling and frequency allocation problems.\\
In this paper we define a new type of vertex multicoloring, motivated by a problem of securing a secret. The idea is to safeguard a secret (a message or an information) by dividing it into parts and distributing those parts amongst the participants of some group. The model assumes a number of participants is corrupted and the number of parts and the distribution is determined in a way so that some subset of the participants can reconstruct the secret after the attack of the corrupted parties.
In our paper, these parts of the secret are modeled by colors so determinig the minimal number of parts and their distribution is the problem of finding a minimal number of colors and the exact coloring function.\\
Let $a\in\mathbb{N}$. We will say that a coloring is $a$-resistant vertex multicoloring if the following holds:
If we remove any $a$ vertices, and their neighbors, from the graph, in the remaining graph there exists a component in which all the colors are present.
$a$-resistant vertex multicoloring is called highly $a$-resistant vertex multicoloring  if any $a$ vertices do not have all the colors.\\
These types of coloring are motivated by the following situation:
Some organization has planted a group of sleepers (spies that live normal life until they are called to perform some mission). Their mission is a secret, divided into parts and distributed among them. Each sleeper can have some or none of the parts. Further, each of the sleepers knows only some of his colleagues. These sleepers can be represented as a graph in which edges connect pairs of sleepers that know each other. The mission can be implemented if there is a connected component of this graph that has all the parts. The assumption is that there are $a$ adversary's agents planted in the sleepers group and if there is an adversary agent among the sleepers, he will betray all the sleepers he knows and give his parts of the secret to the adversary.\\
So each part is represented by a color and if a graph admits a highly $a$-resistant vertex multicoloring then the conditions are fulfilled and this graph is resistant to $a$ adversary agents behaving in the described way.\\
We will first formulate the problem mathematically and then analyze the minimal number of colors for highly $a$-resistant vertex multicoloring for some fixed $a$, namely for $1,2,3$ and $4$. We determine what is the minimal number of vertices a graph must have in order to have a highly $a$-resistant vertex multicoloring, for a fixed $a$, and what is the minimal number of colors needed for coloring of such a graph. For each case we propose an example of a highly $a$-resistant vertex multicolored graph.
We do not ask that highly $a$-resistant vertex multicoloring is proper. However we will see that the given conditions imply that the coloring with minimal number of colors will indeed be proper.

\section{Formulating the problem}

Let us formulate this problem in a mathematical way.\\

Standardly, as in \cite{bollobas}, we denote $G=(V,E)$, where $G$ is a graph with the set of vertices
$V=V(G)$, $|V(G)|=v(G)$, and the set of edges $E=E(G)$. Let $u\in V$ and
$A\subseteq V$. We denote:\\

$N(u)=N_{G}(u)$ the set of neighbors of $u$ in $G$;\\

$M(u)=M_{G}(u)=N(u)\cup\{u\}$;\\

$N(A)=N_{G}(A)={\displaystyle\bigcup\limits_{u\in A}}N_{G}(u)$;\\

$M(A)=M_{G}(A)={\displaystyle\bigcup\limits_{u\in A}}M_{G}(u)$.\\

Further, with $G\backslash A$ we denote a graph obtained from $G$ by deletion
of the vertices in $A$ and their incident edges. If $H_{1}$ and $H_{2}$ are graphs, with $G=H_{1}\cup H_{2}$ we denote a graph where $V(G)=V(H_{1})\cup V(H_{2})$ and $E(G)=E(H_{1})\cup E(H_{2})$. For any set $T$, with
$\mathcal{P}(T)$ we denote the partitive set of $T$.\\

\textbf{Vertex $k$-multicoloring} of graph $G$ is a function $\kappa:V(G)\rightarrow\mathcal{P}(\{1,...,k\})$, where each vertex is colored with some subset of the set of $k$ colors.

Let $a\in\mathbb{N}$. Vertex $k$-multicoloring of a graph $G$ is called \textbf{$a$-resistant vertex $k$-multicoloring} if the following holds:

For each $A\subseteq V(G)$ with $a$ vertices, there is a component $H$ of the graph $G\backslash M_{G}(A)$ such that
$${\displaystyle\bigcup\limits_{u\in V(H)}}\kappa(u)=\{1,...,k\}\text{.}$$

$a$-resistant vertex $k$-multicoloring is called \textbf{highly $a$-resistant vertex $k$-multicoloring} if for each $A\subseteq V(G)$ with $a$ vertices it holds that

\begin{center}
	${\displaystyle\bigcup\limits_{u\in A}}\kappa(u)\neq\{1,...,k\}$.
\end{center}

In other words, vertex $k$-multicoloring is $a$-resistant, for some $a\in \mathbb{N}$, if for each subset $A\subseteq V(G)$ with $a$ vertices, there exists a component $H$ of the graph $G\backslash M_{G}(A)$ such that all $k$ colors are present in $H$. In this case we will sometimes say that $H$ \textit{has all $k$ colors}.
If, in addition to that it holds that none $A\subseteq V(G)$ with $a$ vertices has all the colors, we will say that the multicoloring is highly $a$-resistant.\\
We will denote by $a-HR$ the condition that 
	${\displaystyle\bigcup\limits_{u\in A}}\kappa(u)\neq\{1,...,k\}$, for each $A\subseteq V(G)$ with $a$ vertices. So the multicoloring is highly $a$-resistant if it is $a$-resistant and the $a-HR$ condition holds.\\

It is easily seen that the following holds:\\
i) For some graph $G$ and the multicoloring of its vertices the $a-HR$ condition may hold for some $a\in\mathbb{N}$ but the multicoloring may not be $a$-resistant.\\
ii) If a vertex $k$-multicoloring is not $a$-resistant, for some $a\in \mathbb{N}$, than it is not $b$-resistant, for each $b\in \mathbb{N}, a\leq b$.\\
iii) If $k\leq a$ the $a-HR$ condition cannot hold for any graph $G$ and any $k$-multicoloring of its vertices.\\
iv) If for some $a\in\mathbb{N}$ the $a-HR$ condition holds for $\kappa$ in some graph $G$, then it also holds for any subgraph of $G$.

\section{Results}

\begin{theorem}\label{tm1}
	Let $G$ be a graph with $n$ vertices, $a,k\in\mathbb{N}$, and $\kappa$ a highly $a$-resistant vertex $k$-multicoloring of $G$. It holds:
	
	1) There exists a graph with $n+1$ vertices that admits a highly $a$-resistant vertex $k$-multicoloring.
	
	2) There exists a highly $a$-resistant vertex $k+1$-multicoloring $\kappa'$ of $G$. 
\end{theorem}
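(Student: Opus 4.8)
The plan is to establish both parts by explicit constructions. I would dispose of (2) first, since it requires no change to the graph: given $\kappa$, define $\kappa'\colon V(G)\to\mathcal P(\{1,\dots,k+1\})$ by $\kappa'(u)=\kappa(u)\cup\{k+1\}$ for every vertex $u$, that is, hand the new color to everybody. For $a$-resistance, fix $A\subseteq V(G)$ with $|A|=a$; by hypothesis some component $H$ of $G\setminus M_G(A)$ satisfies $\bigcup_{u\in V(H)}\kappa(u)=\{1,\dots,k\}$, and since $H$ is nonempty and every one of its vertices now also carries color $k+1$, that same $H$ satisfies $\bigcup_{u\in V(H)}\kappa'(u)=\{1,\dots,k+1\}$. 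For the $a$-$HR$ condition, the hypothesis supplies a color $j\in\{1,\dots,k\}$ with $j\notin\bigcup_{u\in A}\kappa(u)$; as $j\neq k+1$ and adjoining $k+1$ to each set cannot create $j$, we still have $j\notin\bigcup_{u\in A}\kappa'(u)$, so $\bigcup_{u\in A}\kappa'(u)\subsetneq\{1,\dots,k+1\}$. (When $n<a$ there are no admissible attack sets and everything is vacuous.)

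For (1) I would let $G'$ consist of $G$ together with one extra \emph{isolated} vertex $w$, and extend the coloring by $\kappa'(w)=\emptyset$ and $\kappa'|_{V(G)}=\kappa$; the idea is that an isolated colorless vertex can neither help an attacker nor destroy a useful component. I would first record that we may assume $\kappa$ is a genuine (non-vacuous) coloring, i.e.\ $n\ge a$, and that then $n=a$ is impossible — taking $A=V(G)$ would leave $G\setminus M_G(A)$ empty, contradicting $a$-resistance — so in fact $n\ge a+1$. Now fix $A\subseteq V(G')$ with $|A|=a$. If $w\notin A$, then $A\subseteq V(G)$ and, because $w$ is isolated, $M_{G'}(A)=M_G(A)$; hence $G'\setminus M_{G'}(A)$ is the disjoint union of $G\setminus M_G(A)$ with the single-vertex component $\{w\}$, so the full-color component of $G\setminus M_G(A)$ provided by the hypothesis is still a full-color component of $G'\setminus M_{G'}(A)$, and the $a$-$HR$ inequality for this $A$ is literally the one assumed in $G$.

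The remaining case is $w\in A$. Write $A=A'\cup\{w\}$ with $|A'|=a-1$; since $w$ is isolated, $M_{G'}(A)=M_G(A')\cup\{w\}$, and therefore $G'\setminus M_{G'}(A)=G\setminus M_G(A')$. Pick any $x\in V(G)\setminus A'$, which exists because $n\ge a$, and set $B=A'\cup\{x\}$, a set of exactly $a$ vertices. Since $M_G(A')\subseteq M_G(B)$, the graph $G\setminus M_G(B)$ is an induced subgraph of $G\setminus M_G(A')$; a component of $G\setminus M_G(B)$ carrying all $k$ colors (one exists by $a$-resistance of $\kappa$) is a connected subgraph of $G\setminus M_G(A')$, hence lies in a single component of $G\setminus M_G(A')$, which therefore also carries all $k$ colors — this is the component required for $G'\setminus M_{G'}(A)$. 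Finally $\bigcup_{u\in A}\kappa'(u)=\bigcup_{u\in A'}\kappa(u)\subseteq\bigcup_{u\in B}\kappa(u)\subsetneq\{1,\dots,k\}$, which is the $a$-$HR$ condition for $G'$.

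I expect the only genuinely non-routine point to be this last case: one must see that reducing from $a$ deleted vertices (the auxiliary set $B$) to the $a-1$ deleted vertices actually present ($A'$) can only merge components, so that the property "some component contains all $k$ colors" is inherited upward. Everything else — the effect of an isolated vertex on the operators $N$ and $M$ and on the component structure, and the one-line check for (2) — is bookkeeping, and the size bound $n\ge a+1$ is needed precisely to guarantee that the auxiliary vertex $x$ exists.
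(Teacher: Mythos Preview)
Your proof is correct and uses exactly the same two constructions as the paper: for (1) adjoin an isolated vertex colored $\emptyset$, and for (2) add the new color $k+1$ to every vertex. The paper dismisses both verifications with ``it is easy to see'', whereas you spell out the details, including the auxiliary-set trick (padding $A'$ to a full $a$-set $B$) to handle the case $w\in A$; this is a valid way to fill in what the paper leaves implicit.
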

\begin{proof}
	Let $G$ be a graph with $n$ vertices, $a,k\in\mathbb{N}$, and $\kappa$ a highly $a$-resistant vertex $k$-multicoloring of $G$.
	
	1) Let $G'$ be defined as $G^{\prime}=(V^{\prime
	},E^{\prime})$, $V^{\prime}=V(G)\cup\{u\}$, $E^{\prime}=E(G)$, and $\kappa'$ a vertex $k$-multicoloring of $G'$ such that $\kappa^{\prime}|_{V^{\prime}\backslash\{u\}}=\kappa$, $\kappa^{\prime}(u)=\emptyset$. It is easy to see that $\kappa'$ is higly $a$-resistant vertex $k$-multicoloring of $G'$.

	2) Let $\kappa'$ be a vertex $(k+1)$-multicoloring of $G$ such that $\kappa^{\prime}(u)=\kappa(u)\cup\{k+1\}$, for
	each $u\in V(G)$. It is easy to see that $\kappa'$ is highly $a$-resistant vertex $(k+1)$-multicoloring.
\end{proof}

Theorem \ref{tm1} implies that if we determine that a highly $a$-resistant vertex $k$-multicoloring doesn't exist for any graph with $n$ vertices, for given $n,a,k\in\mathbb{N}$, then such coloring doesn't exist for any graph with less then $n$ vertices. Also, if for some $n,a,k\in\mathbb{N}$ there exists a graph with $n$ vertices that admits a highly $a$-resistant vertex $k$-multicoloring, then we can multicolor vertices of $G$ with more then $k$ colors and it will also be highly $a$-resistant.
This compels us to search for a minimal number $k$ of colors, for which, for given $a$, there exist $n\in\mathbb{N}$ such that a graph with $n$ vertices admits a highly $a$-resistant vertex $k$-multicoloring, and $n$ is the smallest such number.

Let us denote by $K(a,n)$ a minimal number of colors such that there exists a graph $G$ with $n$ vertices and a highly $a$-resistant vertex multicoloring of $G$ with $K(a,n)$ colors.

If, for some $a$ and $n$ a graph with $n$ vertices that admits a highly $a$-resistant vertex $k$-multicoloring doesn't exist for any $k\in\mathbb{N}$ we will say that $K(a,n)=\infty$.

\begin{theorem}
	It holds
	\[K(1,n)=\left\{
	\begin{array}
	[c]{cc}
	\infty,&  n\leq3\text{;}\\
	2, &  n\geq4\text{.}
	\end{array}
	\right.
	\]
\end{theorem}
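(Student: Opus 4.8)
The plan is to break the statement into three parts: the lower bound $K(1,n)\ge 2$ (valid whenever $K(1,n)$ is finite), the impossibility $K(1,n)=\infty$ for $n\le 3$, and one explicit construction on four vertices, from which the case $n\ge 4$ follows via Theorem~\ref{tm1}.

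The lower bound is immediate from item (iii) in Section~2 with $a=1$: the $1-HR$ condition forces $k\ge 2$, so $K(1,n)\ge 2$ whenever it is finite. The impossibility for $n\le 3$ rests on one structural observation: applied to a single vertex, the $1-HR$ condition says $\kappa(x)\ne\{1,\dots,k\}$ for every $x$, so no single-vertex component can carry all $k$ colors; hence, in any highly $1$-resistant vertex $k$-multicoloring of $G$, for each $w\in V(G)$ the graph $G\backslash M_G(\{w\})$ must contain a connected component on at least two vertices. Now if $v(G)\le 3$, deleting $M_G(\{w\})$ removes at least $w$ itself, so $G\backslash M_G(\{w\})$ has at most $v(G)-1\le 2$ vertices, and it has exactly two only when $v(G)=3$ and $w$ is isolated. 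For the structural requirement to hold for \emph{every} $w$, every vertex must therefore be isolated, that is, $G$ is edgeless; but then $G\backslash M_G(\{w\})$ is a pair of isolated vertices and has no two-vertex component --- contradiction. Hence no graph on at most three vertices admits such a multicoloring, and $K(1,n)=\infty$ for $n\le 3$.

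For $n\ge 4$ it remains to exhibit a highly $1$-resistant vertex $2$-multicoloring. I would take $G_0$ to be two disjoint edges $v_1v_2$ and $v_3v_4$, with $\kappa(v_1)=\kappa(v_3)=\{1\}$ and $\kappa(v_2)=\kappa(v_4)=\{2\}$. No vertex is colored $\{1,2\}$, so $1-HR$ holds; and for each of the four choices of $w$ the graph $G_0\backslash M_{G_0}(\{w\})$ is exactly the opposite edge, whose two endpoints together carry colors $1$ and $2$. Thus $\kappa$ is highly $1$-resistant, so $K(1,4)\le 2$ and hence $K(1,4)=2$. Finally, applying part~1) of Theorem~\ref{tm1} repeatedly produces, for every $n\ge 4$, a graph on $n$ vertices with a highly $1$-resistant vertex $2$-multicoloring, giving $K(1,n)=2$.

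The argument is short; the only place needing mild care --- the ``main obstacle'', such as it is --- is the case analysis for $n\le 3$, and even there everything collapses to the single observation that a one-vertex component cannot contain all the colors without violating $1-HR$.
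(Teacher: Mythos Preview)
Your proof is correct and follows essentially the same approach as the paper: both arguments pivot on the observation that the $1$--$HR$ condition forbids any single vertex from carrying all colors, use this to rule out $n\le 3$, exhibit a $4$-vertex construction, and then invoke Theorem~\ref{tm1}. The only cosmetic difference is the case split for $n\le 3$ (the paper separates ``edgeless'' versus ``has an edge'', while you argue that every vertex would have to be isolated and then derive the contradiction), and your explicit two-disjoint-edges example plays the role of the paper's Figure~\ref{fig:f1}.
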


\begin{proof}
	First, let us prove that a graph with $3$ vertices doesn't admit a highly $1$-resistant vertex $k$-multicoloring, for any $k\in\mathbb{N}$. Suppose to the contrary that there exists $k\in\mathbb{N}$, a graph $G$ with $3$ vertices and a multicoloring of vertices of $G$ with $k$ colors that is highly $1$-resistant.
	Let us denote $V(G)=\{v_{1},v_{2},v_{3}\}$. It is easy to see that the claim holds if $E(G)$ is empty, so let us assume $E(G)$ is non-empty. Without the loss of generality let us assume $v_{1}v_{2}\in E(G)$. If $A=\{v_{1}\}$ then
	in order for $\kappa$ to be $1$-resistant it must hold $\kappa(v_{3})=\{1,..,k\}$, but than it is obviously not highly $1$-resistant.A graph with $4$ vertices with a highly $1$-resistant vertex $2$-multicoloring is given in Figure \ref{fig:f1}.
	
	\begin{figure}[h]				
		\centering
		\includegraphics[scale=0.7]{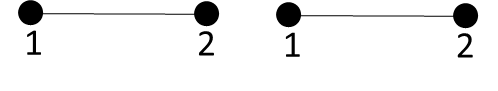}
		\caption{A graph with $4$ vertices with a highly $1$-resistant vertex $2$-multicoloring}
		\label{fig:f1}\ \
	\end{figure}

	Now the claim follows from Theorem \ref{tm1}.
\end{proof}

\begin{theorem}
	It holds
	\[
	K(2,n)=\left\{
	\begin{array}
	[c]{cc}%
	\infty, & n\leq8\text{;}\\
	3, & n\geq9\text{.}
	\end{array}
	\right.
	\]
\end{theorem}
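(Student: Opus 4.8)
The plan is to prove the two directions separately. For the upper bound ($n\ge 9$) I would take $G_9$ to be the disjoint union of three triangles and color the three vertices of each triangle with $\{1\}$, $\{2\}$, $\{3\}$ respectively. The $2$-$HR$ condition is immediate, since every vertex carries exactly one color so any two vertices carry at most two colors. For $2$-resistance, if a deleted vertex lies in a triangle then (the triangle being complete) that whole triangle lies in $M_G(A)$; since $|A|=2$ meets at most two of the three triangles, at least one triangle survives intact in $G\backslash M_G(A)$ and it is colorful. Hence $K(2,9)\le 3$, while $K(2,9)\ge 3$ follows from observation (iii) ($k\le a$ makes $a$-$HR$ impossible). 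For $n>9$ one appeals to Theorem~\ref{tm1}(1) to adjoin isolated vertices, and $K(2,n)\ge 3$ again by (iii); so $K(2,n)=3$ for all $n\ge 9$. For the lower bound, by the contrapositive of Theorem~\ref{tm1}(1) it suffices to show that \emph{no} graph on exactly $8$ vertices admits a highly $2$-resistant vertex $k$-multicoloring for any $k$.

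So suppose $G$ has $8$ vertices and $\kappa$ is highly $2$-resistant on $G$; I aim for a contradiction. From $2$-$HR$, no pair (hence no single vertex) is colorful, so every colorful set — in particular every colorful component of any $G\backslash M_G(A)$ — has at least $3$ vertices. Consequently, for every pair $\{x,y\}$ we get $|M_G(\{x,y\})|=|M(x)\cup M(y)|\le 8-3=5$. This already forces $\Delta(G)\le 3$: a vertex $x$ of degree $\ge 4$ has $|M(x)|\ge 5$ and admits some $y\notin M(x)$ (otherwise $|M(x)|=8$), so $|M(x)\cup M(y)|\ge 6$. Next I would rule out degree $3$. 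If $\deg(v)=3$, write $N(v)=\{a,b,c\}$ and $W=V\backslash M(v)$ with $|W|=4$. For $w\in W$ the bound on $|M(v)\cup M(w)|$ forces $M(w)\subseteq M(v)\cup\{w\}$, i.e. $N(w)\subseteq\{a,b,c\}$, so $W$ is independent; and for each of $a,b,c$ the same bound forces at most one neighbour in $W$, so there are at most $3<4$ edges between $\{a,b,c\}$ and $W$, whence some $w_0\in W$ is isolated. But then $G\backslash M_G(\{v,a\})$ consists only of the $\ge 3$ surviving vertices of $W$, which are pairwise non-adjacent and have all their neighbours in $\{a,b,c\}\subseteq M_G(\{v,a\})$; so it is a set of isolated vertices with no colorful component, contradicting $2$-resistance. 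Hence $\Delta(G)\le 2$, i.e. $G$ is a disjoint union of paths and cycles.

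Now I would do a case analysis on the number $t$ of components of $G$ that are colorful. Since every component of any $G\backslash M_G(A)$ lies inside a component of $G$, $2$-resistance forces $t\ge 1$. If $t\ge 3$ then $v(G)\ge 3t\ge 9$, impossible. If $t=1$ with colorful component $D$: for a pair $\{x,y\}\subseteq V(D)$ the colorful component of $G\backslash M_G(\{x,y\})$ must lie in $D$, hence (as $M_G(\{x,y\})\cap V(D)=M_D(\{x,y\})$) it is a colorful component of $D\backslash M_D(\{x,y\})$; so $D$ is highly $2$-resistant. But $D$ is a path or cycle on $3$ to $8$ vertices, and a direct check of these finitely many graphs shows none is highly $2$-resistant — in each case some pair's removal leaves only components of at most $2$ vertices (or nothing), which cannot be colorful under $2$-$HR$. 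If $t=2$ with colorful components $D_1,D_2$: for $x\in V(D_1)$, $y\in V(D_2)$ the colorful component of $G\backslash M_G(\{x,y\})$ lies in $D_1$ or $D_2$, hence $D_1\backslash M_{D_1}(x)$ or $D_2\backslash M_{D_2}(y)$ has a colorful component; if neither $D_i$ satisfied this for all single-vertex deletions we could choose a bad pair $(x_0,y_0)$, so some $D_i$, say $D_1$, is $1$-resistant, hence (inheriting $2$-$HR$) highly $1$-resistant. Then $v(D_1)\ge 4$ since $K(1,n)=\infty$ for $n\le 3$, and $v(D_2)\ge 3$, forcing $v(D_1)\le 5$; but a path or cycle on $4$ or $5$ vertices is not highly $1$-resistant (a single deletion leaves a component of at most $2$ vertices), contradiction. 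This exhausts all cases, so no $8$-vertex graph works, and by Theorem~\ref{tm1}(1) neither does any graph with fewer vertices.

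The main obstacle is the reduction to $\Delta(G)\le 2$: the crude counting bound only yields $\Delta(G)\le 3$, and killing the degree-$3$ case requires the two-step neighbourhood/pigeonhole argument to first extract an isolated vertex and then exhibit a pair whose deletion destroys all colorful structure. Everything after that — the split on the number of colorful components and the verification that the few small paths and cycles are not highly $1$- or $2$-resistant — is routine, resting entirely on the fact that a component of at most $2$ vertices can never be colorful once $2$-$HR$ holds.
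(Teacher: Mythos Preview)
Your proof is correct, and your construction for $n\ge 9$ (three disjoint triangles, each carrying the three colours once) matches the paper's. However, for the impossibility at $n=8$ you take a substantially longer route than the paper.

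The paper's argument is a two-step greedy removal and avoids any degree bound or structural classification: since a colorful component needs at least three vertices (by $2$-$HR$), $G$ has some component on $\ge 3$ vertices, hence a vertex $v_1$ with $d(v_1)\ge 2$; then $G_1=G\backslash M(v_1)$ has at most $5$ vertices. Either $G_1$ already has no component on $\ge 3$ vertices (and we are done, adjoining any second vertex to $v_1$), or $G_1$ again contains a vertex $v_2$ of degree $\ge 2$, and $G\backslash M(\{v_1,v_2\})$ has at most $2$ vertices. That is the whole proof.

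By contrast, you first squeeze out $\Delta(G)\le 3$ from the inequality $|M(x)\cup M(y)|\le 5$, then spend a paragraph eliminating degree~$3$ (the detour through an isolated $w_0\in W$ is in fact unnecessary: once you know $W$ is independent with $N(w)\subseteq\{a,b,c\}$, deleting $M(\{v,a\})$ already leaves only singletons), and finally run a case analysis on the number of colorful path/cycle components. All of this is valid, but none of it is needed: the paper's iterative ``pick a vertex of degree $\ge 2$, peel off its closed neighbourhood'' works regardless of $\Delta(G)$ and finishes in two steps because $8-3-3=2$. A minor point of phrasing: in your $t=2$ case you say ``not highly $1$-resistant'' where what you actually use is ``not $1$-resistant under $2$-$HR$'' (so that a surviving $P_2$ still fails to be colorful); the parenthetical clarifies your intent, but it is worth stating precisely.
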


\begin{proof}
	First, let us prove that a graph with $8$ vertices doesn't admit a highly $2$-resistant vertex $k$-multicoloring, for any $k\in\mathbb{N}$. Suppose to the contrary, that there exists $k\in\mathbb{N}$, a graph $G$ with $8$ vertices and a multicoloring of vertices of $G$ with $k$ colors that is highly $2$-resistant. Note that in order for $2-HR$ condition to hold, for each ${u,v}\subseteq V(G)$, $\kappa(u)\cup \kappa(v) \neq \{1,...,k\}$, that is, no two vertices have all the colors. Hence, there is a component with at least three vertices in $G$. Therefore, there is a vertex of degree at least
	two, let us denote it by $v_{1}$. Let us observe the graph $G_{1}=G\backslash
	M(v_{1})$ that has at most $5$ vertices. If $G_{1}$ has no component with more
	than $2$ vertices, then the multicoloring is not $2$-resistant because components with at most $2$ vertices don't have all the colors. Otherwise, there is a vertex of degree at least $2$ in $G_{1}$, let us
	denote it by $v_{2}$. Then the graph $G_{2}=G_{1}\backslash M(v_{2})$ has at
	most two vertices, and again, the multicoloring cannot be $2$-resistant and highly $2$-resistant. Hence, indeed $K(2,n)=\infty$. The fact that
	$K(2,n)=3$ for $n\geq9$ follows from Theorem \ref{tm1} and Figure \ref{fig:f2}.
	
	\begin{figure}[h]
		\centering\includegraphics[scale=0.3]{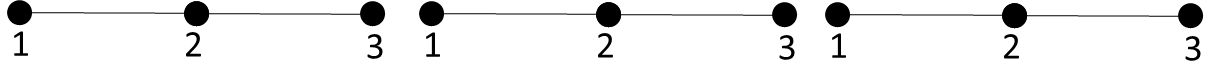}
		\caption{A graph with $9$ vertices with a highly $2$-resistant vertex $3$-multicoloring} 
		\label{fig:f2}\ \
	\end{figure}
	
	This proves the Theorem.
\end{proof}

Let us prove two auxiliary Lemmas.

\begin{lemma}\label{lm4}
	Let $G$ be a graph with at most $7$ vertices, different from cycle
	$C_{7}$, $k\in\mathbb{N}$, and $\kappa:V(G)\rightarrow\mathcal{P}(\{1,...,k\})$\ a vertex $k$-multicoloring. Then one
	of the following holds:
	
	i) The $3-HR$ condition doesn't hold for $\kappa$ and $G$;
	
	ii) $\kappa$ is not $1$-resistant vertex $k$-multicoloring of $G$.
	
\end{lemma}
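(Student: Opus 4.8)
The plan is to argue by contradiction: suppose $G$ has at most $7$ vertices, is not $C_7$, and $\kappa$ both satisfies the $3$-$HR$ condition and is $1$-resistant. From $1$-resistance, for every single vertex $v$ the graph $G\setminus M(v)$ has a component carrying all $k$ colors; in particular $G\setminus M(v)$ is nonempty and has a component on at least some vertices that, together, see all $k$ colors. The key quantitative observation is that $|V(G\setminus M(v))| = n - 1 - \deg(v) \le 6 - \deg(v)$, so if $\deg(v)$ is large this residual graph is very small, which will eventually collide with the $3$-$HR$ condition (three vertices must fail to cover all colors, so no set of $\le 3$ vertices — and in particular no small component — can carry all $k$ colors unless it has at least $4$ vertices contributing).

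First I would record the consequences of $3$-$HR$: no three vertices together have all $k$ colors, hence any vertex set that does have all $k$ colors has at least $4$ vertices, and moreover $k \ge 4$ by item (iii) in the preliminaries. Combining this with $1$-resistance applied to an arbitrary vertex $v$: the all-colors component $H$ of $G\setminus M(v)$ has $|V(H)|\ge 4$, so $G\setminus M(v)$ has at least $4$ vertices, giving $\deg(v) \le n-5 \le 2$ for every $v$. Thus $G$ has maximum degree at most $2$, so every component of $G$ is a path or a cycle. Next, since a component carrying all $k$ colors needs at least $4$ vertices, and this must happen inside $G\setminus M(v)$ which has at most $n-1-\deg(v)\le 6$ vertices, I would push the counting: if $\deg(v)=0$ for some $v$ then $\{v\}$ is a component and $1$-resistance applied to a vertex in another component still must leave an all-colors component of size $\ge 4$; tracking the total vertex budget of $7$ forces the structure to be essentially one long path or one long cycle.

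Then I would do the case analysis on the components, using that $n\le 7$ and $\Delta(G)\le 2$. The relevant cases are: $G$ is a single path $P_n$ with $n\le 7$; $G$ is a single cycle $C_n$ with $n\le 6$ (the case $C_7$ is excluded by hypothesis); or $G$ is disconnected. For a path $P_n$: picking $v$ to be an interior vertex near the middle splits $G\setminus M(v)$ into two paths each of at most $2$ vertices when $n\le 6$, so no all-colors component survives, contradicting $1$-resistance; one must then check that putting all colors on an endpoint-adjacent region is blocked by $3$-$HR$ (a path on $\le 3$ vertices cannot carry all colors). For $C_n$, $n\le 6$: removing $M(v)$ for any vertex $v$ leaves a path on $n-3\le 3$ vertices, which by $3$-$HR$ cannot carry all $k$ colors, so $\kappa$ is not $1$-resistant. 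For disconnected $G$: apply $1$-resistance to a vertex in a component $C$ that does \emph{not} contain the surviving all-colors component; then that all-colors component must live in another component and have $\ge 4$ vertices, but also one must consider a vertex in \emph{that} other component — removing its closed neighborhood kills part of it, and the remaining pieces across all components are too small (total $\le 7$ vertices split unfavorably) to host an all-colors component of size $\ge 4$. Each sub-case ends in a contradiction with either $1$-resistance or $3$-$HR$.

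The main obstacle I expect is the bookkeeping in the disconnected case and in the path case near the endpoints: one has to be careful that "all colors present in a component" could in principle be achieved by a small component if $k$ were small, so it is essential to have nailed down $k\ge 4$ and the "$\ge 4$ vertices needed" consequence of $3$-$HR$ first, and then to verify that for \emph{every} choice of the deleted vertex $v$ the residual graph fails — not just for one convenient choice. Organizing the argument as "$\Delta(G)\le 2$, hence union of paths and cycles, hence finitely many shapes on $\le 7$ vertices, check each" keeps the casework finite and mechanical; the exclusion of $C_7$ is exactly the one shape where removing a closed neighborhood leaves a $P_4$, which is large enough to possibly carry all colors, so it genuinely has to be set aside here.
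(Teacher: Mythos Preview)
Your proposal is correct and follows essentially the same route as the paper: both arguments use the $3$-$HR$ condition to force any all-colors set to have at least four vertices, deduce $\Delta(G)\le 2$ (you phrase it as $\deg(v)\le n-5\le 2$, the paper handles $\deg(v)\ge 3$ as a direct case), and then finish by a short case analysis on paths, small cycles, and disconnected graphs. Your framing by contradiction and the explicit remark on why $C_7$ must be excluded are minor presentational differences, not a different method.
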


\begin{proof}
	If there is a vertex $v$ of degree at least $3$ in $G$, then
	$G\backslash M(v)$ has at most three vertices. If 
	${\displaystyle\bigcup\limits_{u\in G\backslash M(v)}\kappa(u)=\{1,...,k\}}$ then (i) holds, and if
		${\displaystyle\bigcup\limits_{u\in G\backslash M(v)}}\kappa(u)\neq\{1,...,k\}$ then (ii) holds.
	
	Let us assume $\Delta(G)\leq2$. If $G$ is connected,
	then $G$ is a path. Let $v$ be a central vertex (or one of two central
	vertices) of this path. Note that each component of $G\backslash M(v)$ has at
	most two vertices. But then again, as above, (i) or (ii) holds. Therefore, let us
	assume that $G$ is not connected. As for each component with less then $4$ vertices (i) or (ii) holds, let us observe the largest
	component $H$, and assume that it has at least $4$ vertices. Note that it has
	at most $6$ vertices and a vertex $w$ of degree at least $2$. But then
	$G\backslash M(w)$ has at most $3$ vertices, so again (i) or (ii) holds.
\end{proof}

\begin{lemma}\label{lm5}
	Let $\kappa:V(C_{7})\rightarrow\mathcal{P}(\{1,...,6\})$ be a vertex
	$6$-multicoloring. Then one of the following holds:
	
	i) The $3-HR$ condition doesn't hold for $\kappa$ and $C_{7}$;
	
	ii) $\kappa$ is not $1$-resistant vertex $6$-multicoloring of $C_{7}$.
\end{lemma}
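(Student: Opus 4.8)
The plan is to argue by contradiction: suppose $\kappa$ is a $1$-resistant vertex $6$-multicoloring of $C_7$ for which the $3$-HR condition also holds, and derive a contradiction from a counting argument. Label the vertices $v_0,\dots,v_6$ cyclically. First I would extract the structural consequences of $1$-resistance. Removing a single vertex $v_i$ together with its two neighbours leaves a path $P$ on the remaining $4$ vertices (namely $v_{i+2},v_{i+3},v_{i+4},v_{i+5}$, indices mod $7$), and since this path is connected it is the unique component, so $1$-resistance forces $\kappa(v_{i+2})\cup\kappa(v_{i+3})\cup\kappa(v_{i+4})\cup\kappa(v_{i+5}) = \{1,\dots,6\}$. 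Running $i$ over all $7$ vertices, this says: every set of four \emph{consecutive} vertices must carry all $6$ colours between them. Next I would extract the consequence of $3$-HR: no three vertices together carry all $6$ colours; in particular no three \emph{consecutive} vertices do.

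The heart of the argument is then a combinatorial squeeze on where a fixed colour, say colour $1$, can appear. Let $S_1 = \{\, i : 1 \in \kappa(v_i)\,\}$ be the set of vertices carrying colour $1$. From the $1$-resistance condition, every window of four consecutive vertices meets $S_1$, so $S_1$ is a "covering" set for all $7$ cyclic windows of length $4$; this forces $|S_1| \geq 2$ (a single vertex cannot lie in all four-windows of $C_7$) and more precisely that the gaps between consecutive elements of $S_1$ around the cycle are all at most $3$. On the other hand, I want to also bound $|S_1|$ from above, or rather control the colours jointly: the key tension is that each of the $6$ colours must be "spread out" (every $4$-window hit), yet each vertex holds a subset of $\{1,\dots,6\}$ and triples of consecutive vertices must \emph{miss} at least one colour. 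Summing $|\kappa(v_i)|$ over a window of four consecutive vertices: the union has size $6$, so the total is at least $6$; summing over all $7$ windows counts each $|\kappa(v_i)|$ four times, giving $4\sum_i |\kappa(v_i)| \geq 42$, i.e. $\sum_i |\kappa(v_i)| \geq 11$ (so $\geq 11$). Meanwhile $3$-HR applied to consecutive triples: the union of any three consecutive $\kappa(v_i)$ has size $\leq 5$, so in particular $|\kappa(v_i)|+|\kappa(v_{i+1})|+|\kappa(v_{i+2})| $ can be as large as $15$ only if there is heavy overlap — this bound alone is too weak, so I expect to need the union (not sum) form.

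The cleaner route, which I would pursue as the main line, is to combine the two window conditions directly. Fix any vertex $v_i$. The four consecutive vertices $v_{i+1},v_{i+2},v_{i+3},v_{i+4}$ have union $\{1,\dots,6\}$, while the three consecutive vertices $v_{i+1},v_{i+2},v_{i+3}$ have union of size at most $5$; hence $\kappa(v_{i+4})$ must contain the colour (or colours) missing from $\kappa(v_{i+1})\cup\kappa(v_{i+2})\cup\kappa(v_{i+3})$, so $\kappa(v_{i+4}) \neq \emptyset$. By symmetry \emph{every} vertex is non-empty, and moreover for each $i$ there is a colour lying in $\kappa(v_{i+4})$ but in none of $\kappa(v_{i+1}),\kappa(v_{i+2}),\kappa(v_{i+3})$ — call it a colour "owned" privately by $v_{i+4}$ relative to that window. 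Likewise, reading the window $v_{i},v_{i+1},v_{i+2},v_{i+3}$ against the triple $v_{i+1},v_{i+2},v_{i+3}$, some colour is privately owned by $v_i$ against $v_{i+1},v_{i+2},v_{i+3}$. I would then track these private colours around the cycle: each vertex $v_j$ acquires a private colour from the window on its "left" and one from the window on its "right", and the $3$-HR condition on the triples containing $v_j$ limits how these can coincide. Pushing this bookkeeping all the way around the $7$-cycle should produce a colour that is forced to appear in three consecutive vertices whose union is then all of $\{1,\dots,6\}$, or else force $\sum_i|\kappa(v_i)|$ to exceed what $3$-HR on every triple permits — either way a contradiction.

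The main obstacle I anticipate is precisely this last bookkeeping step: turning the local "private colour" observations into a global contradiction on a cycle of the particular length $7$ with exactly $6$ colours. The numbers are tight — $C_7$ is singled out as the exceptional graph in Lemma \ref{lm4}, and $6$ colours is presumably the exact threshold — so the argument cannot have much slack, and I expect to need a careful case analysis on the sizes $|\kappa(v_i)|$ (likely showing most of them equal $1$ or $2$) together with the cyclic gap structure of each colour class $S_c$. A cleaner finish may come from observing that the seven triples $\{v_i,v_{i+1},v_{i+2}\}$ each omit at least one colour, assigning to each triple an omitted colour, and using a pigeonhole/parity argument on $C_7$ to find a colour omitted by a set of triples that jointly cover all vertices, contradicting $1$-resistance (since that colour would then appear nowhere, or only in positions inconsistent with every $4$-window being covered).
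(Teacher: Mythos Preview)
Your setup is exactly right, and in fact the ``cleaner finish'' you mention parenthetically at the end \emph{is} the paper's proof, and it works immediately without any of the bookkeeping you propose as your main line. The paper simply observes: there are $7$ consecutive triples $W_i=\{v_i,v_{i+1},v_{i+2}\}$, each missing at least one of the $6$ colours (by $3$-HR), so by pigeonhole two distinct triples $W_i$, $W_j$ miss the \emph{same} colour $c$. The one observation you are missing is that on $C_7$, for any $i\neq j$ the union $W_i\cup W_j$ is an arc of at least $4$ consecutive vertices (indeed, two length-$3$ arcs on a $7$-cycle can leave out at most one vertex, so their union is always a single arc). Hence some window of four consecutive vertices misses colour $c$, contradicting $1$-resistance directly. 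That is the whole argument.

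By contrast, your main line --- the ``private colour'' bookkeeping and the inequality $\sum_i|\kappa(v_i)|\geq 11$ --- is not wrong, but it does not close: the sum lower bound from $4$-windows has no matching upper bound from $3$-HR (as you note, the union-size $\leq 5$ constraint on triples does not control the sum), and the private-colour tracking around the cycle would require a case analysis you have not carried out. So as written the proposal has a genuine gap: the intended main argument is incomplete, and the actual proof is the throwaway remark at the end, which you should promote to the main line and finish with the arc-union observation above.
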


\begin{proof}
	Let us suppose (i) doesn't hold and let us denote the vertices in the cycle by
	$w_{0},...,w_{6}$ so that $w_{i}w_{i+1}\in E$ for $i\in \{0,...,5\}$ and $w_{6}w_{0}\in E$. Let $W_{i}=\{w_{i},w_{i+1},w_{i+2}\}$ where summation is
	taken modulo $7$. Note that because (i) doesn't hold, each set $W_{i}$ has to not have at least one color, that is 
	${\displaystyle\bigcup\limits_{u\in W_{i}}\kappa(u)\neq\{1,...,6\}}$, for each $i\in \{1,...,6\}$.
	Since there are $7$ sets and $6$ colors, there are two sets $W_{i}$ and $W_{j}$
	that miss the same color, but then there are at least four consecutive vertices
	that are missing the same color. Let us denote them by $x_{i},x_{i+1}%
	,x_{i+2},x_{i+3}$. Now if we observe $C_{7}\backslash M(x_{i+5})=\{x_{i},x_{i+1},x_{i+2}%
	,x_{i+3}\}$, we see that (ii) holds.
\end{proof}

Now, we can prove:

\begin{theorem}\label{tm6}It holds
	\[
	K(3,n)=\left\{
	\begin{array}
	[c]{cc}%
	\infty, & n\leq13\text{;}\\
	7, & n=14,15\text{;}\\
	4, & n\geq16\text{.}%
	\end{array}
	\right.
	\]
\end{theorem}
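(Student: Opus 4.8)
The plan is to prove the three cases separately, using the pattern already established in the earlier theorems: a lower-bound argument showing no small graph admits the coloring, together with an explicit example realizing the bound, and then invoking Theorem \ref{tm1} to propagate the example to all larger $n$.

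For the claim $K(3,n)=\infty$ when $n\le 13$, I would first reduce to the case $n=13$ via Theorem \ref{tm1}(1) (if no $13$-vertex graph works, no smaller one does). The $3-HR$ condition forces that no three vertices carry all $k$ colors, so every component of $G$ must have at least four vertices; hence $G$ has at most three components. Now I would iterate the "remove a closed neighborhood" trick from the $K(2,n)$ proof three times: pick a vertex $v_1$ of degree $\ge 3$ if one exists (so $|M(v_1)|\ge 4$ and $G\setminus M(v_1)$ has $\le 9$ vertices), etc. The subtlety is that a vertex of large degree need not exist — the obstruction case is exactly when $\Delta(G)\le 2$, i.e. $G$ is a union of paths and cycles. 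This is where Lemmas \ref{lm4} and \ref{lm5} enter: for graphs on $\le 7$ vertices they rule out the combination "$3-HR$ holds and $\kappa$ is $1$-resistant," and a $3$-resistant coloring is in particular $1$-resistant. So after removing $M(A)$ for a suitable $2$-set $A$ (using the two middle vertices of the longest path, or suitable vertices of a cycle), I would arrange that the remaining graph is a small graph to which Lemma \ref{lm4} or \ref{lm5} applies, contradicting $3$-resistance. I expect the bookkeeping over the possible component-size profiles of a $13$-vertex graph with $\Delta\le 2$ (and over which two vertices to delete first) to be the main obstacle — one must check that in every profile, two well-chosen deletions leave a graph with $\le 7$ vertices that is either not $C_7$ or is $C_7$ with $k\le 6$, and in the residual $C_7$-with-$k=6$ situation that Lemma \ref{lm5} still bites.

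For $n=14,15$ I must show $K(3,14)=K(3,15)=7$, which has two halves. The upper bound "$\le 7$" needs an explicit $14$-vertex graph with a highly $3$-resistant vertex $7$-multicoloring (then Theorem \ref{tm1}(1) covers $n=15$, and one would present it in a figure as in the earlier proofs). The lower bound "$\ge 7$", i.e. that no $14$- or $15$-vertex graph admits such a coloring with $k\le 6$, is the heart of the theorem: by Theorem \ref{tm1}(1) it suffices to handle $n=15$ and $k=6$. Here I would again split on $\Delta(G)$. If some vertex has degree $\ge 3$, remove its closed neighborhood to drop to $\le 11$ vertices and recurse, eventually reaching a graph on $\le 7$ vertices; if at some stage the residual graph is $C_7$ with $k\le 6$ colors, apply Lemma \ref{lm5}, otherwise apply Lemma \ref{lm4}; throughout, use that $3$-resistance of $\kappa$ on $G$ implies $1$-resistance on the residual graph after one deletion, giving the contradiction. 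If $\Delta(G)\le 2$, then $G$ is a disjoint union of paths and cycles on $15$ vertices; I would show that two deletions $M(A)$ with $|A|=2$, chosen greedily to kill the largest components, always leave $\le 7$ vertices in each surviving piece, so Lemmas \ref{lm4}–\ref{lm5} finish the argument. The delicate point is ensuring the residual graph after the relevant deletions is genuinely small enough and that the single exceptional graph $C_7$ is correctly accounted for — this is precisely why Lemma \ref{lm5} was isolated with the specific value $k=6$.

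Finally, for $n\ge 16$ the claim $K(3,n)=4$ has the easy direction built in: $k\ge 4$ because $k\le 3=a$ is impossible by observation (iii) in the excerpt (the $a-HR$ condition fails when $k\le a$), so we need $k\ge a+1=4$. For the matching upper bound I would exhibit a single graph on $16$ vertices with a highly $3$-resistant vertex $4$-multicoloring, display it in a figure, and then invoke Theorem \ref{tm1}(1) to extend to every $n\ge 16$. Assembling these pieces proves the theorem.
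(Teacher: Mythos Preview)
Your overall architecture matches the paper's proof closely: reduce via Theorem~\ref{tm1}, use Lemmas~\ref{lm4} and~\ref{lm5} as the base cases for iterated closed-neighborhood deletion, and supply explicit constructions (two copies of $C_7$ for $n=14$, a $16$-vertex graph for $n\ge 16$) for the upper bounds. The paper organizes the case analysis by the number of components with at least four vertices rather than by $\Delta(G)$ as you do, but this is a cosmetic difference.

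There is one genuine confusion to fix. In the $n\le 13$ part you write that after two deletions the residual ``is either not $C_7$ or is $C_7$ with $k\le 6$,'' intending to invoke Lemma~\ref{lm5} in the latter case. But for $K(3,n)=\infty$ the number of colors $k$ is arbitrary, so you cannot assume $k\le 6$; Lemma~\ref{lm5} is stated only for six colors and gives you nothing when $k\ge 7$. The paper never needs Lemma~\ref{lm5} for $n\le 13$: in its component-by-component analysis the residuals to which Lemma~\ref{lm4} is applied always have at most six vertices (so they are never $C_7$), and the single-component $\Delta\le 2$ case is handled by choosing all three deletion vertices at once to leave only pieces of size at most three. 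If you insist on your ``two deletions, then one lemma'' scheme, you must choose the first two deletions so that the residual avoids $C_7$ entirely (e.g.\ always delete from a largest remaining component, which brings you to at most six vertices or a path), rather than hoping Lemma~\ref{lm5} will rescue you. Lemma~\ref{lm5} is genuinely needed only in the $n=14,15$ lower bound, where $k=6$ is fixed by hypothesis --- and there your outline is correct.
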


\begin{proof}
	First, let us prove that $K(3,n)=\infty$, for each $n\leq13$, that is that a graph with $n\leq13$ vertices cannot have a highly $3$-resistant vertex $k$-multicoloring, for any $k\in \mathbb{N}$.\\ Suppose to the
	contrary, that there exists $k\in\mathbb{N}$, a graph $G$ with $13$ vertices and a multicoloring of vertices of $G$ with $k$ colors that is highly $3$-resistant. Note that $\kappa$ is  not highly $3$-resistant in components with at most three vertices, hence we may restrict our attention to the components with at most four vertices. There are
	at most three such components. Let us distinguish three cases.
	
	1) There are three components with at least 4 vertices.
	
	Note that each of these components has at most 5 vertices, so the claim
	follows from Lemma \ref{lm4}.
	
	2) There are exactly two components with at least 4 vertices.
	
	Let $H_{1}$ be the smaller of these components (it has at most $6$ vertices). If for $H_{1}$ and $\kappa$ the $3-HR$ condition doesn't hold then $\kappa$ is not highly $3$-resistant, which is a contradiction, so from Lemma \ref{lm4} it follows that $\kappa$ is not $1$-resistant in $H_{1}$. Component $H_{2}$ has at most $9$ vertices and
	$H_{2}$ has a vertex $w$ of degree at least $2$. Note that $H_{2}%
	\backslash M(w)$ has at most $6$ vertices. Again from Lemma \ref{lm4} it
	follows that $\kappa$ in $H_{2}
	\backslash M(w)$ is not $1$-resistant. Hence, $\kappa$ in $G$ is not $3$-resistant, which is a contradiction.
	
	3) There is exactly one component $H$ with at least 4 vertices.
	
	First, suppose that $\Delta(H)\leq2$, i.e. that it is either a cycle or a
	path. Note that in both cases there are three vertices $A=\{w_{1},w_{2}%
	,w_{3}\}$ such that no component of $H\backslash M(A)$ has more than $3$
	vertices, but then $\kappa$ is obviously not highly $3$-resistant. Hence, there is a vertex $w$ of degree at least
	$3$. Graph $G\backslash M(w)$ has at most $9$ vertices, and we can follow the same conclusions as in the case 2).
	
	Hence indeed, $K(3,n)=\infty$ for $n\leq13$.
	
	Now, let us prove that $K(3,n)=7$ for $p=14,15$. First, let us prove that a graph with $15$ vertices doesn't admit a highly $3$-resistant vertex $6$-multicoloring. Suppose to the contrary, and let $G$ be a graph with $15$ vertices and $\kappa$ a highly $3$-resistant vertex $6$-multicoloring. Again, note that for components with at most three vertices the $3-HR$ condition doesn't hold, so we may restrict our attention to the components with
	at lest four vertices. There are at most three such components. Let us
	distinguish three cases.
	
	a) There are three components with at least 4 vertices.
	
	If at least one of the components is $C_{7}$ the claim follows from Lemmas
	\ref{lm4} and \ref{lm5}, and if that is not the case, the claim follows from
	Lemma \ref{lm4}.
	
	b) There are exactly two components with at least 4 vertices.
	
	The larger component (say $H_{2}$) has at most $11$ vertices. If it is a cycle
	or a path, then there are two vertices $w_{2},w_{3}$ such that no component of
	$H_{2}\backslash M(\{w_{2},w_{3}\})$ has more than $3$ vertices. Let $w_{1}$
	be the vertex with the highest degree in the second largest component $H_{1}$.
	
	If we observe vertices $w_{1},w_{2},w_{3}$ we see that no component of $G\backslash M(\{w_{1},w_{2}%
	,w_{3}\})$ has all the colors or the $3-HR$ condition wouldn't hold. But then $G$ is not $3$-resistant.
	Therefore, $H_{2}$ is neither a cycle nor a path,
	hence $\Delta(H_{2})\geq3$. Hence, there is a vertex $w_{2}$ such that
	$H_{2}\backslash M(w_{2})$ has at most $7$ vertices. As $3-HR$ condition must hold for $H_{2}\backslash M(w_{2})$, from Lemmas
	\ref{lm4} and \ref{lm5}, it follows that $\kappa$ is not $1$-resistant in $H_{2}\backslash M(w_{2})$. Let us denote by $w_{3}$ the vertex such that $H_{2}\backslash M(w_{2})\backslash M(w_{3})$ doesn't have a component with all the colors, and let $w_{1}$
	be the vertex with the highest degree in the second largest component $H_{1}$.
	Now, no component of $G\backslash M(\{w_{1},w_{2},w_{3}\})$ has all the colors
	and the contradiction is obtained.
	
	c) There is exactly one component $H_{1}$ with at least $4$ vertices.
	
	If this component is either a cycle or a path, there is a set of three
	vertices $A=\{w_{1},w_{2},w_{3}\}$ such that no component of $H_{1}\backslash
	M(A)$ has more than $2$ vertices, so $\kappa$ is obviously not highly $3$-resistant. Therefore, we may assume that there is a
	vertex $w_{1}$ in $H_{1}$ of degree at least $3$. Hence, $H_{1}\backslash
	M(w_{1})$ has at most $11$ vertices. If $H_{1}\backslash M(w_{1})$ is
	either a cycle or a path, then there are vertices $w_{2}$ and $w_{3}$ such
	that $(H_{1}\backslash M(w_{1}))\backslash M(\{w_{2},w_{3}\})$ has no
	component with more than $3$ vertices, but then again either $\kappa$ is not $3$-resistant in $G$ or the $3-HR$ condition doesn't hold. Hence, we
	may conclude that $H_{1}\backslash M(w_{1})$ has a vertex $w_{2}$ of
	degree at least $3$. Now the graph $(H_{1}\backslash M(w_{1}%
	))\backslash M(w_{2})=H_{1}\backslash M(\{w_{1},w_{2}\})$ has at most
	$7$ vertices. Since $3-HR$ condition must hold for $H_{1}\backslash M(\{w_{1},w_{2}\})$, from Lemmas \ref{lm4} and \ref{lm5} it follows that $H_{1}\backslash M(\{w_{1},w_{2}\})$ is not $1$-resistant. But then $\kappa$ is not highly $3$-resistant in $H_{1}$, and therefore also not in $G$.
	
	Hence indeed, for a graph with $15$ vertices a highly $3$-resistant vertex $6$-multicoloring doesn't exist. From Theorem \ref{tm1} it follows that this is also true for graphs with $14$ vertices.
	
	In order to prove that $K(3,n)=7$, where $n=14,15$, it is sufficient to prove
	that $K(3,14)=7$. Let us present the graph $G$ with $14$ vertices and a highly $3$-resistant vertex $7$-multicoloring of $G$. Let $G$ be a union of two cycles of length $7$ in which the
	vertices are denoted $v_{0}^{j},v_{1}^{j},...,v_{6}^{j}$, $j=1,2$, and let $\{1,...,7\}$ be the set of colors. Let vertex
	$v_{i}^{j}$ have the set of colors $\{i,i+3\}, i=1,...,7$, where all operations are modulo
	$7$. Note that no three vertices can have all the colors, because there are $7$
	colors and every vertex has just two of them. Let $A$ be the set of any $3$
	vertices in $G$. Without the loss of generality we may assume that there is at
	most one vertex from $A$ in the cycle $v_{0}^{1},v_{1}^{1},...,v_{6}^{1}$.
	Hence, there is a path of $4$ vertices $v_{l}^{1},v_{l+1}^{1}v_{l+2}%
	^{1}v_{l+3}^{1}$ in the component of $G\backslash M(A)$. Note that these $4$
	vertices have all the keys. This realization is given in Figure \ref{fig:3_14_7}. This completes the proof that $K(3,n)=7$ for
	$n=14,15$.
	
		\begin{figure}[h]
		\centering\includegraphics[scale=0.7]{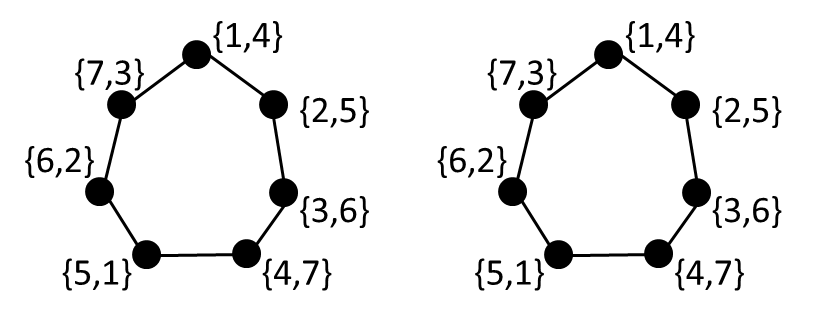}
		\caption{A graph $G$ with $14$ vertices and a highly $3$-resistant vertex $7$-multicoloring of $G$}
		\label{fig:3_14_7} \ \
	\end{figure}
	
	Finally, it holds that a graph with $n$ vertices doesn't admit a highly $3$-resistant vertex $3$-multicoloring for any $n\in\mathbb{N}$, as commented at the beginning.
	It remains to show that a graph with $16$ vertices exists that admits a highly $3$-resistant vertex $4$-multicoloring. This is shown in Figure \ref{fig:f3}.

	\begin{figure}[h]
		\centering\includegraphics[scale=0.5]{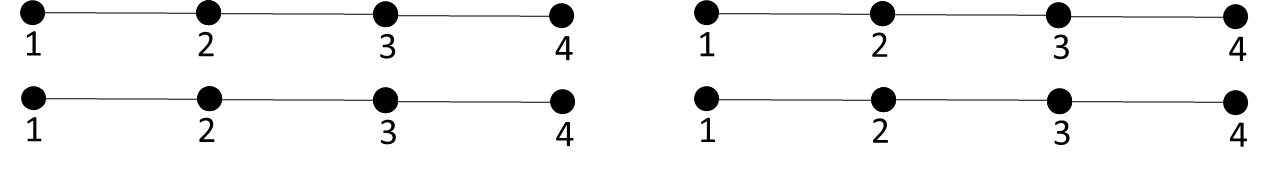}
		\caption{A graph with $16$ vertices exists that admits a highly $3$-resistant vertex $4$-multicoloring}
		\label{fig:f3} \ \
	\end{figure}

\end{proof}

Let us prove an auxiliary Lemma:

\begin{lemma}
	\label{lm7}Let $G$ be a graph with at most $8$ vertices, different from
	$C_{8}$, and let $\kappa:V(G)\rightarrow\{1,...,k\}$ be a vertex $k$-multicoloring.
	Then one of the following holds:
	
	i) The $4-HR$ condition doesn't hold for $\kappa$ and $G$;
	
	ii) $\kappa$ is not $1$-resistant vertex $k$-multicoloring of $G$.

\end{lemma}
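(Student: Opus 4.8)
My plan is to follow the proof of Lemma~\ref{lm4} closely, raising the relevant thresholds from $3$ to $4$. The engine will be the following observation: if one can exhibit a single vertex $v\in V(G)$ such that \emph{every} component of $G\backslash M(v)$ has at most $4$ vertices, then one of (i), (ii) holds. Indeed, if some component $C$ of $G\backslash M(v)$ satisfies $\bigcup_{u\in V(C)}\kappa(u)=\{1,...,k\}$, then enlarging $V(C)$ to a set $A$ of exactly $4$ vertices of $G$ gives $\bigcup_{u\in A}\kappa(u)=\{1,...,k\}$, so the $4-HR$ condition fails and (i) holds; while if no component of $G\backslash M(v)$ has all $k$ colors, then $A=\{v\}$ witnesses that $\kappa$ is not $1$-resistant, so (ii) holds. (The enlargement step uses $v(G)\ge 4$, which is automatic in every case that reaches it.)

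Then I would split on $\Delta(G)$. If $\Delta(G)\ge 3$, pick $v$ with $\deg v\ge 3$; then $|M(v)|\ge 4$, so $G\backslash M(v)$ has at most $8-4=4$ vertices and the engine applies. If $\Delta(G)\le 2$, then $G$ is a disjoint union of paths and cycles. When $G$ is connected: if $G$ is a path, a central vertex $v$ leaves only pieces with at most $3$ vertices; if $G=C_{m}$ with $3\le m\le 7$ (recall $G\ne C_{8}$), then for any vertex $v$ the graph $C_{m}\backslash M(v)=P_{m-3}$ has at most $4$ vertices. Either way the engine applies.

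The remaining case is disconnected $G$ with $\Delta(G)\le 2$. Exactly as in Lemma~\ref{lm4}, the sub-case in which every component has fewer than $4$ vertices is disposed of directly (any $v$ then leaves only pieces with at most $3$ vertices), so I would assume the largest component $H$ has between $4$ and $7$ vertices; in particular $H\ne C_{8}$. Pick $v\in V(H)$: a central vertex of $H$ if $H$ is a path, and an arbitrary vertex of $H$ if $H$ is a cycle. Since all neighbours of $v$ lie in $H$, the components of $G$ other than $H$ survive intact inside $G\backslash M(v)$ and together span at most $8-v(H)\le 4$ vertices, so each of them has at most $4$ vertices; and the pieces of $H$ remaining after deletion of $M(v)$ also have at most $4$ vertices each, the extreme case being $H=C_{7}$, which leaves $P_{4}$. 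Hence every component of $G\backslash M(v)$ has at most $4$ vertices, and the engine finishes the proof.

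I do not expect a serious obstacle here. The one delicate point is the $4$-vertex bound in the engine, and it is precisely why $C_{8}$ must be excluded: for $C_{7}$ one gets $C_{7}\backslash M(v)=P_{4}$, a set of $4$ vertices, so the presence of all $k$ colors there immediately breaks $4-HR$; but for $C_{8}$ one would get $P_{5}$, and a path on $5$ vertices can carry all $k$ colors while no $4$ of its vertices do (for instance $k=5$ with the five vertices colored $\{1\},\{2\},\{3\},\{4\},\{5\}$), so neither (i) nor (ii) is forced. Thus $C_{8}$ must be handled separately, just as $C_{7}$ was in Lemma~\ref{lm5}. Apart from that, the only thing to watch is the vertex-count bookkeeping in the disconnected case.
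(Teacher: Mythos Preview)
Your proof is correct and follows essentially the same approach as the paper's: both locate a vertex $v$ such that every component of $G\backslash M(v)$ has at most four vertices, then apply the $4$-HR/$1$-resistance dichotomy (your ``engine''). The paper organizes its case analysis around the largest component of $G$ rather than around $\Delta(G)$, but the substance is identical.
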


\begin{proof}
	Suppose to the contrary, let $G$ be a graph with at most $8$ vertices and $\kappa:V(G)\rightarrow\{1,...,k\}$ be a vertex $k$-multicoloring of $G$ such that $\kappa$ is $1$-resistant and the $4-HR$ condition holds. It follows that no component with at most $4$ vertices has all the colors.
	Let us observe the largest component, $H_{1}$. If $H_{1}$ is
	a path with at most $8$ vertices, or a cycle with at most $7$ vertices, it can
	be easily seen that there is a vertex $v_{0}$ such that no component of
	$H_{1}\backslash M(v_{0})$ has more than $4$ vertices. If some component of $H_{1}\backslash M(v_{0})$ has
	all the colors, then the $4-HR$ doesn't hold, and if not, then $H_{1}$ is not $1$-resistant and therefore nor is $G$. Hence, we may assume
	that $H_{1}$ has a vertex $w$ of degree at least $3$. Then, $H_{1}\backslash
	M(w)$ has at most $4$ vertices and the claim follows as before. 
\end{proof}

\begin{theorem}
	\label{tm8}
	$K(4,n)=\infty$ for $n\leq20$.
\end{theorem}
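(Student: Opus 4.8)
The proof should go by contradiction, in the style of the proofs of the preceding theorems. Suppose some graph on at most $20$ vertices admits a highly $4$-resistant vertex $k$-multicoloring; by Theorem \ref{tm1} we may assume $G$ has exactly $20$ vertices, and fix such a $\kappa$. The aim is to produce a $4$-element set $A$ with no component of $G\setminus M(A)$ carrying all $k$ colors, contradicting $4$-resistance. Two structural facts come first. By the $4$-$HR$ condition any set of at most $4$ vertices misses a color, so only ``large'' components (those with $\ge 5$ vertices) can ever be the all-colors component, and $G$ has at most $4$ of them. Secondly (peeling/restriction): for any vertex $w$, $\kappa$ restricted to $G\setminus M(w)$ is again highly $3$-resistant --- for any three further vertices $B$, applying $4$-resistance to $A=\{w\}\cup B$ yields an all-colors component of $G\setminus M_G(A)$, which is contained in a component of $(G\setminus M(w))\setminus M(B)$, while $3$-$HR$ descends from $4$-$HR$ by padding $B$ with $w$. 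Iterating, after $j$ peels one has a highly $(4-j)$-resistant coloring on the survivor.

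Combined with the earlier theorems ($K(3,n)=\infty$ for $n\le13$, $K(2,n)=\infty$ for $n\le 8$, $K(1,n)=\infty$ for $n\le 3$), peeling a maximum-degree vertex forces the survivor, being highly $3$-resistant, to have $\ge14$ vertices, so $\Delta(G)\le 5$; and a disjoint union of paths and cycles on $\le 20$ vertices is shattered into pieces of size $\le 4$ by the closed neighbourhoods of $\le 4$ suitably spaced vertices, so $\Delta(G)\ge 3$ as well. The core is a per-component reduction: for each large component $H$ find a small $A_H\subseteq V(H)$ with no component of $H\setminus M(A_H)$ having all colors, then take $A=\bigcup_H A_H$ (padded to exactly $4$ vertices, which is harmless since deleting more only shrinks components). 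Lemma \ref{lm7} gives a single vertex when $|V(H)|\le 8$ and $H\neq C_8$; for larger $H$ one repeatedly peels a vertex of degree $\ge 3$ (each peel deletes $\ge 4$ vertices) until every surviving component has $\le 8$ vertices, breaking path/cycle components by hand, and one checks that about $\lceil(|V(H)|-4)/8\rceil+1$ vertices always suffice.

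Organising by the number $t$ of large components: $t=4$ forces four copies of $C_5$ and Lemma \ref{lm7} gives one vertex each; for $t=3$ the sizes are at most $(6,7,10)$ up to ordering, costing $1+1+2$; for $t=2$ the smaller component has $\le 10$ vertices and the costs are $1+3$ or $2+2$; for $t=1$ three peels bring the single $\le 20$-vertex component below $9$ vertices, after which Lemma \ref{lm7} (or a path/cycle break) finishes. In every case the total is $4$, the desired contradiction.

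The main obstacle, and where the bookkeeping is genuinely tight, is the component $C_8$, which is deliberately excluded from Lemma \ref{lm7} because it really can carry a $1$-resistant $4$-$HR$ coloring --- for instance with $12$ colors indexed by the pairs of vertices at cyclic distance $3$ or $4$, each vertex receiving the three pairs through it. A $C_8$ component, or one that peels down to $C_8$, costs $2$ rather than $1$, and in cases such as $G=C_8\sqcup H$ with $|V(H)|=12$, or $t=1$ with a $20$-vertex component reducing to $C_8$ after three peels, the naive count can reach $5$. I expect to handle this in two ways: a direct check that the few small graphs which ``peel to $C_8$'' are in fact $2$-breakable by a cleverer choice of removed vertices; and a restriction argument --- when the total vertex count forces the companion $H$ of a $C_8$ component to have $\le 13$ vertices, $\kappa|_{C_8}$ must itself be $1$-resistant (else $\kappa|_H$ would be highly $3$-resistant, impossible by Theorem \ref{tm6}), and then the $1$-resistance of $\kappa|_{C_8}$ forces every color onto $\ge 2$ spread-out vertices of the cycle, hence many colors and sizeable palettes, which via the mixed $4$-$HR$ constraint across the two components is incompatible with $H$ simultaneously being highly $2$-resistant on so few vertices with the same color set --- again reducing to $K(2,\cdot)=\infty$ or $K(3,\cdot)=\infty$. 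Making these $C_8$ subcases and the exact path/cycle breaking bounds precise is, I expect, the bulk of the work.
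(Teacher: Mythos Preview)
Your overall skeleton is sound and largely parallels the paper: argue by contradiction, ignore components of size $\le 4$ via $4$-HR, peel high-degree vertices, invoke Lemma~\ref{lm7} on the debris, and organise by the number of large components. The observation that $\kappa|_{G\setminus M(w)}$ is genuinely highly $3$-resistant (and more generally highly $(4-j)$-resistant after $j$ peels), so that the earlier theorems on $K(3,\cdot)$, $K(2,\cdot)$ can be quoted directly, is a clean idea the paper does not make explicit; it gives the degree bound $\Delta(G)\le 5$ for free. (Minor slip: ``$t=4$ forces four copies of $C_5$'' is false --- they are arbitrary $5$-vertex connected graphs --- but Lemma~\ref{lm7} still applies.)

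The real gap is exactly where you flag it, and neither of your two proposed patches closes it. Your ``restriction argument'' for a component $C_8\sqcup H$ with $|V(H)|\le 12$ does correctly force $\kappa|_{C_8}$ to be $1$-resistant (else $\kappa|_H$ would be highly $3$-resistant on $\le 13$ vertices). But from there you cannot finish: you then need $\kappa|_H$ to fail $2$-resistance so that $2+2$ vertices suffice, and to show this for a general $12$-vertex $H$ you peel a degree-$3$ vertex of $H$ and apply Lemma~\ref{lm7} --- which fails precisely when $H\setminus M(v)\cong C_8$. So the argument is circular: it reduces a $C_8$ problem to the same $C_8$-after-peel problem one level down. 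The appeal to ``mixed $4$-HR across components'' does not rescue this, because (as your own $12$-colour example shows) $1$-resistant $4$-HR colourings of $C_8$ exist for large $k$, and $K(2,12)<\infty$, so no size/colour count alone yields a contradiction.

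What is actually needed --- and what the paper does in its cases 2.2.3 and 2.3 --- is a \emph{structural} argument about the original graph, not the colouring: if every degree-$3$ vertex $w$ has $G'\setminus M(w)\cong C_8$ (or $C_8\cup C_8$), one analyses the edges from $N(w)$ into that $C_8$. Connectivity forces such an edge, producing a new degree-$3$ vertex $u$ on the cycle; tracing the resulting cycle structure shows that for some such $u$ the deletion $G'\setminus M(u)$ is \emph{not} $C_8$, contradicting the hypothesis. This edge-chasing step is the genuine content of the hard subcase, and your ``direct check that the few small graphs which peel to $C_8$ are $2$-breakable'' would have to reproduce it. Until that is written out, the proof is incomplete.
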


\begin{proof}
	Suppose to the contrary, that there exists $k\in\mathbb{N}$, a graph $G$ with $20$ vertices and a multicoloring of vertices of $G$ with $k$ colors that is highly $4$-resistant. 
	Let us distinguish two cases, depending on the maximal degree of $G$.
	
	1) $\Delta(G)\leq2$.
	
	In this case, $G$ is a union of components that are paths and cycles. Note that $4-HR$
	implies that no component with at most $4$ vertices can have all the colors.
	Hence, we restrict our attention to the components with at least $5$ vertices. Let
	us distinguish four subcases.
	
	1.1) There are four components with at least $5$ vertices.
	
	Note that each of these components has exactly $5$ vertices. Let us denote
	these components by $H_{1},...,H_{4}$ and let $w_{i}\in H_{i}$, $i=1,...,4$,
	be arbitrary vertices. Note that graph $G\backslash M(\{w_{1},...,w_{4}\})$
	has no component with at least $5$ vertices, but then $\kappa$ cannot be $4$-resistant and highly $4$-resistant in $G$.
	
	1.2) There are exactly three components with at least $5$ vertices.
	
	Let us denote these components by $H_{1},H_{2}$ and $H_{3}$ in such a way that
	$v(H_{1})\leq v(H_{2})\leq v(H_{3})$. It follows $v(H_{1}
	),v(H_{2})\leq7$. As $4-HR$ condition holds for $\kappa$ and $G$, Lemma \ref{lm7} implies that $H_{1}$ and $H_{2}$ are not $1$-resistant, that is, there are
	vertices $w_{1}\in V(H_{1})$ and $w_{2}\in V(H_{2})$ such that $H_{i}%
	\backslash M(w_{i})$, $i=1,2$, does not have all the colors. Further, note that
	$v(H_{3})\leq10$. Hence, there are two vertices $w_{3}$ and $w_{4}$ such
	that each component of $H_{3}\backslash M(\{w_{3},w_{4}\})$ has at most two
	vertices. Therefore, no component of $G\backslash M(\{w_{1},...,w_{4}\})$ can
	have all the colors.
	
	1.3) There are exactly two components with at least $5$ vertices.
	
	Let us denote these components by $H_{1}$ and $H_{2}$ in such a way that
	$v(H_{1})\leq v(H_{2})$. If $v(H_{1}),v(H_{2})\leq14$ then there are
	four vertices $w_{1},w_{2}\in V(H_{1})$ and $w_{3},w_{4}\in V(H_{2})$ such
	that no component of $H_{1}\backslash M(\{w_{1},w_{2}\})$ has more than $4$
	vertices and that no component of $H_{2}\backslash M(\{w_{3,}w_{4}\})$ has
	more than 4 vertices, so the claim follows as before. If $v(H_{1})=5$ and $v(H_{2})=15$, the
	proof goes analogously, except that we observe $w_{1}\in V(H_{1})$ and
	$w_{2},w_{3},w_{4}\in V(H_{2})$.
	
	1.4) There is exactly one component with at least $5$ vertices.
	
	It can be easily seen that $4$ vertices, $w_{1},...,w_{4}$, can be found, such
	that no component of $G\backslash M(\{w_{1},...,w_{4}\})$ has more than $4$
	vertices which leads to a contradiction.
	
	This completes the proof of case 1).
	
	2) $\Delta(G)\geq3$.
	
	Let $v_{1}$ be a vertex in $G$ such that $d(v_{1})=\Delta(G)$. Note that
	$G_{2}=G\backslash M(v_{1})$ has at most $16$ vertices. Let us distinguish
	three subcases.
	
	2.1) $\Delta(G_{2})\leq2$ and $G_{2}$ is not isomorphic to $C_{8}\cup C_{8}$.
	
	As before, we observe only components with at least $5$ vertices. We
	distinguish four subcases.
	
	2.1.1) If there are three components $H_{1},H_{2}$ and $H_{3}$, then each of
	them has at most $6$ vertices, and from Lemma \ref{lm7} it follows that neither of them is $1$-resistant, that is,
	there are vertices $w_{i}\in V(H_{i})$ such that no component of
	$H_{i}\backslash M(w_{i})$ has all the colors which leads to contradiction with $G$ being highly $4$-resistant.
	
	2.1.2) If there are two components $H_{1}$ and $H_{2}$, such that
	$v(H_{1})\leq v(H_{2})$ and $v(H_{1})\leq7$, then again Lemma
	\ref{lm7} implies that there is a vertex $w_{1}\in V(H_{1})$ such that no
	component $H_{1}\backslash M(w_{1})$ of has all the colors. Also, there are two
	vertices $w_{2}$ and $w_{3}$ such that no component of $H_{2}\backslash
	M(\{w_{2},w_{3}\})$ has more than
	\[
	\left\lceil \dfrac{11-2\cdot3}{2}\right\rceil =3
	\]
	vertices. This again leads to a contradiction.
	
	2.1.3) If there are two components $H_{1}$ and $H_{2}$ such that
	$v(H_{1})=v(H_{2})$, then at least one of them (say $H_{1}$) is not a
	cycle, but a path. Hence, there is a vertex $w_{1}\in V(H_{1})$ such that no
	component of $H_{1}\backslash M(w_{1})$ has more than
	\[
	\left\lceil \dfrac{8-3}{2}\right\rceil =3
	\]
	vertices, and there are vertices $w_{2},w_{3}\in V(H_{2})$ such that no
	component of $H_{2}\backslash M(\{w_{2},w_{3}\})$ has more than
	\[
	\left\lceil \dfrac{8-2\cdot3}{2}\right\rceil =1
	\]
	vertex. This leads to a contradiction.
	
	2.1.4) If there is only one component with more than $4$ vertices, then there
	are three vertices $w_{1},w_{2},w_{3}$ such that each component of
	$G_{2}\backslash M(\{w_{1},w_{2},w_{3}\})$ has at most
	\[
	\left\lceil \dfrac{16-3\cdot3}{3}\right\rceil =3
	\]
	vertices, which leads to a contradiction.
	
	2.2) $\Delta(G_{2})\geq3$.
	
	Let $v_{2}$ be a vertex in $G_{2}$ such that $d(v_{2})=\Delta(G_{2})$.
	
	Note that $G_{3}=G_{2}\backslash M(v_{2})$ has at most $12$ vertices. Let us
	distinguish two subcases.
	
	2.2.1) $\Delta(G_{3})\leq2$.
	
	Note that $G_{3}$ has at most two components with at least $5$ vertices. If
	$G_{3}$ has only one component with at least $5$ vertices, then there are two
	vertices $w_{3}$ and $w_{4}$ such that $G_{3}\backslash M(\{w_{3},w_{4}\})$
	has no more than
	\[
	\left\lceil \dfrac{12-3\cdot2}{2}\right\rceil =3
	\]
	vertices, which leads to a contradiction. If there are two components $H_{1}$
	and $H_{2}$ with at least $5$ vertices, then each of them has at most $7$
	vertices so from Lemma \ref{lm7} it follows that there is a
	vertex $w_{3}$ such that no component of $H_{1}\backslash M(w_{3})$ has all
	the colors, and there is a vertex $w_{4}$ such that no component of
	$H_{2}\backslash M(w_{4})$ has all the colors, which also leads to a contradiction.
	
	2.2.2) There is a vertex $w_{3}$ of degree at least $3$ such that
	$G_{3}\backslash M(w_{3})$ is not isomorphic to $C_{8}$.
	
	Note that the graph $G_{4}=G_{3}\backslash M(w_{3})$ has at most $8$ vertices.
	From Lemma \ref{lm7} it follows that there is a vertex $w_{4}$ such
	that no component of $G_{4}\backslash M(w_{4})$ has all the colors, which leads
	to contradiction.
	
	2.2.3) For each vertex $w_{3}$ of degree at least $3$ it holds that
	$G_{3}\backslash M(w_{3})$ is isomorphic to $C_{8}$.
	
	Let us denote the vertices of $G_{3}\backslash M(w_{3})$ by $u_{1},u_{2},...,u_{8}$, so that $u_{i}u_{i+1}\in E$ for $i\in \{1,...,8\}$ and $u_{8}u_{1}\in E$ and let us denote neighbors of $w_{3}$
	by $x_{1},x_{2},x_{3}$. If there is no edge between some $u_{i}$ and some
	$x_{i}$ then $G\backslash M(\{w_{1},w_{2},u_{1},u_{4}\})$ doesn't have a
	component with more then 4 vertices and that would lead to a contradiction.
	Hence, without the loss of generality, we may assume that $x_{1}u_{1}\in
	E(G)$. Hence, $u_{1}$ is a vertex of degree at least $3$. Therefore,
	$G_{3}\backslash M(u_{1})$ is isomorphic to $C_{8}$. Without the loss of generality, we may
	assume that $u_{7}x_{2},$ $u_{3}x_{3}\in E(G_{3})$ (other option, that
	$u_{7}x_{3}$ and $u_{3}x_{2}$ are edges in $G_{3}$ is analogous). Hence, the
	edges of $G_{3}$ are
	\[
	\{u_{i}u_{i+1}:i=1,...,7\}\cup\{u_{8}u_{1}\}\cup\{w_{3}x_{i}:i=1,2,3\}\cup
	\{u_{1}x_{1},u_{7}x_{2},u_{3}x_{3}\}\text{,}%
	\]
	but then $u_{7}$ has the degree $3$ and $G_{3}\backslash M(u_{7})$ is not
	isomorphic to $C_{8}$, which is a contradiction.
	
	2.3) $G_{2}$ is isomorphic to $C_{8}\cup C_{8}$.
	
	Let us denote by $u_{1}^{j},u_{2}^{j},...,u_{8}^{j}$, $j=1,2$ vertices of
	these two cycles in order of their appearance and let us denote any $3$
	neighbors of $w_{1}$ in $G$ by $x_{1},x_{2},x_{3}$. If there is no edge
	between some $x_{p}$ and some $u_{i}^{j}$, $p=1,2,3$, $j=1,2$, $i=1,...,8$,
	then $G\backslash M(\{u_{1}^{1},u_{4}^{1},u_{1}^{2},u_{4}^{2}\})$ has no
	component with at least $5$ vertices which leads to a contradiction. Hence, let
	us assume, without the loss of generality, that $x_{1}u_{1}^{1}\in E(G)$. Then
	$u_{1}^{1}$ has the degree $3$. If $G\backslash M(u_{1}^{1})$ is not
	isomorphic to $C_{8}+C_{8}$, then one of the previous cases holds. If
	$G\backslash M(u_{1}^{1})$ is isomorphic to $C_{8}\cup C_{8}$, then we may assume
	(similarly as above), without the loss of generality, that $u_{7}^{1}x_{2}$,
	$u_{3}^{1}x_{3}\in E(G)$, but then it can be shown that $G\backslash
	M(u_{7}^{1})$ is not isomorphic to $C_{8}\cup C_{8}$. For $G\backslash M(u_{7}%
	^{1})$ one of the previous cases holds.
	
	This completes the proof of the Theorem.
\end{proof}

In order to determine $K(4,21)$ we need the following Lemmas.

\begin{lemma}
	\label{lm9}Let $\kappa:V(C_{8})\rightarrow\{1,...,9\}$ be a vertex $9$-multicoloring. Then one of the following holds:
	
	i) The $4-HR$ condition doesn't hold for $\kappa$ and $C_{8}$;
	
	ii) $\kappa$ is not $1$-resistant vertex $9$-multicoloring of $C_{8}$.

\end{lemma}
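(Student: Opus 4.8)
The plan is to mimic the structure of Lemma~\ref{lm5}, adapting the counting argument to a $7$-vertex-deletion window on an $8$-cycle with $9$ colors. Suppose neither (i) nor (ii) holds, so $\kappa$ is $1$-resistant and the $4-HR$ condition does hold. Label the vertices $w_0,\dots,w_7$ cyclically with indices mod $8$, and for each $i$ let $W_i=\{w_i,w_{i+1},w_{i+2},w_{i+3}\}$ be a block of four consecutive vertices. Observe that $C_8\backslash M(w_{i+5})$ is exactly the path on the four vertices $w_i,w_{i+1},w_{i+2},w_{i+3}$ (since $M(w_{i+5})=\{w_{i+4},w_{i+5},w_{i+6}\}$), and this path is the unique component of $C_8\backslash M(w_{i+5})$. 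Since $\kappa$ is $1$-resistant, this component must carry all $9$ colors, i.e.\ $\bigcup_{u\in W_i}\kappa(u)=\{1,\dots,9\}$ for every $i$.

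Now I would use the $4-HR$ condition together with a pigeonhole count. For each of the $8$ values of $i$, the set $A=M(w_i)=\{w_{i-1},w_i,w_{i+1}\}$ has exactly three vertices; extending to a fourth vertex will violate $4-HR$ unless three vertices already fall short, so instead I argue more directly on four-vertex sets. The cleaner route: each $W_i$ is forced to contain all $9$ colors among its $4$ vertices, so some vertex of $W_i$ carries at least $\lceil 9/4\rceil=3$ colors, and in fact the multiset of color-counts over $W_i$ sums to at least $9$. The key tension is that $4-HR$ forbids any $4$ vertices from jointly covering $\{1,\dots,9\}$, yet $1$-resistance forces every \emph{consecutive} block of $4$ to do exactly that — a contradiction as soon as some $W_i$ is itself a valid "any $4$ vertices" set, which it always is. Wait: $W_i$ has exactly $4$ vertices, so $4-HR$ says $\bigcup_{u\in W_i}\kappa(u)\neq\{1,\dots,9\}$, directly contradicting the conclusion of the previous paragraph. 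Hence the assumption fails and (i) or (ii) holds.

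So the argument is actually shorter than Lemma~\ref{lm5}'s: the window $M(w_{i+5})$ on $C_8$ leaves behind precisely a $4$-vertex path, and $4-HR$ speaks about exactly $4$-vertex sets, so the collision is immediate without any pigeonhole on colors at all. The only point requiring care — and the step I expect to be the main obstacle, though it is minor — is the bookkeeping that $C_8\backslash M(w_{i+5})$ really is connected (a single $P_4$) rather than splitting into smaller pieces, and that no edge other than the cycle edges is present (which holds since we are given $\kappa$ is defined on $C_8$ itself, not a supergraph). Once that is checked, picking any one $i$, say $i=0$, and combining "$W_0$ must have all colors by $1$-resistance" with "$W_0$ must not have all colors by $4-HR$" closes the proof.
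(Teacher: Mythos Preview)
Your argument contains a fatal arithmetic slip. You claim that $C_8\backslash M(w_{i+5})$ is the path on the four vertices $w_i,w_{i+1},w_{i+2},w_{i+3}$, but $M(w_{i+5})=\{w_{i+4},w_{i+5},w_{i+6}\}$ removes only three vertices from an $8$-cycle, leaving \emph{five} vertices, namely the path $w_{i+7},w_i,w_{i+1},w_{i+2},w_{i+3}$ (equivalently $w_{i-1},\dots,w_{i+3}$). Thus $1$-resistance forces each consecutive block of five vertices to carry all nine colors, while $4$-HR only forbids blocks of four from doing so. There is no immediate collision, and your ``shorter than Lemma~\ref{lm5}'' argument collapses entirely. (In Lemma~\ref{lm5} the analogous step works because $C_7$ minus three vertices really does leave a $P_4$.)

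A second warning sign you overlooked: your proposed argument never uses the hypothesis $k=9$; it would apply verbatim to any $k$. But the lemma is stated specifically for nine colors, and the paper's proof uses this count in an essential way. The actual argument runs as follows: by $4$-HR each consecutive $4$-block $\{u_p,\dots,u_{p+3}\}$ misses some color $k_p$; by $1$-resistance (applied to the $5$-vertex remainder) both $u_{p-1}$ and $u_{p+4}$ must carry $k_p$; one then shows the eight values $k_p$ are pairwise distinct, so after relabeling $k_p=p$ the odd-indexed vertices together hold $\{1,\dots,8\}$ and likewise the even-indexed ones; whichever parity class contains a vertex with color $9$ then violates $4$-HR. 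The gap between ``$5$-blocks cover'' and ``$4$-blocks don't'' is exactly what forces this more delicate pigeonhole, and it is where the bound $9$ enters.
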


\begin{proof}
	Suppose to the contrary, that neither (i) nor (ii) holds. It means that  $\kappa:V(C_{8})\rightarrow\{1,...,9\}$ is a $1$-resistant multicoloring for which the $4-HR$ condition holds. 
	Let us denote vertices of this cycle by
	$u_{1},...,u_{8}$ so that $u_{i}u_{i+1}\in E$ for $i\in \{1,...,7\}$ and $u_{8}u_{1}\in E$. Note that each four consecutive vertices $u_{p}
	,u_{p+1},u_{p+2},u_{p+3}$ don't have all the colors, because of the $4-HR$. Let us denote by $k_{p}$
	one of the colors that they don't have. Note that $u_{p-1}$ must have the color
	$k_{p}$, because otherwise no vertex in $C_{8}\backslash M(u_{p-3})$ would
	have color $k_{p}$, and $\kappa$ wouldn't be $1$-resistant. 
	Analogously note that $u_{p+4}$ must have the color $k_{p}$.
	
	Further, let us prove that $p\neq q$ implies $k_{p}\neq k_{q}$. Suppose to the
	contrary. Then it holds that no vertex in $\{u_{q},...,u_{q+3}\}$ has the color
	$k_{p}=k_{q}$ and both vertices in $\{u_{p-1},u_{p+4}\}$ have this color, but
	$\{u_{q},...,u_{q+3}\}\cap\{u_{p-1},u_{p+3}\}\neq\emptyset$ so that is a
	contradiction with the number of vertices. Hence, without the loss of
	generality, we may assume that $k_{p}=p$. Therefore, $\kappa(u_{1})\cup
	\kappa(u_{3})\cup\kappa(u_{5})\cup\kappa(u_{7})\supseteq\{1,..,8\}$ and
	$\kappa(u_{2})\cup\kappa(u_{4})\cup\kappa(u_{6})\cup\kappa(u_{8}%
	)\supseteq\{1,..,8\}$. Hence, no matter which vertex has the color $9$, there
	will be four vertices that have all the colors which is contradiction with $4-HR$.
\end{proof}

\begin{lemma}
	\label{lm10}Let $G$ be a graph with at most $8$ vertices and let
	$\kappa:V(G)\rightarrow\{1,...,9\}$ be a vertex $9$-multicoloring. One of the following holds:
	
	i) The $4-HR$ condition doesn't hold for $\kappa$ and $G$;
	
	ii) $\kappa$ is not $1$-resistant vertex $9$-multicoloring of $G$.	
\end{lemma}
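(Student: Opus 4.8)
The plan is to derive this immediately as a case split that invokes the two preceding lemmas, since Lemma~\ref{lm10} is exactly the union of the situations covered by Lemma~\ref{lm7} and Lemma~\ref{lm9}. Suppose, for contradiction, that neither (i) nor (ii) holds; that is, $\kappa$ is a $1$-resistant vertex $9$-multicoloring of $G$ for which the $4-HR$ condition holds. Since $G$ has at most $8$ vertices, there are exactly two possibilities: either $G$ is isomorphic to $C_8$, or it is not.

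First I would dispose of the case $G \not\cong C_8$. Here $G$ is a graph with at most $8$ vertices different from $C_8$, so Lemma~\ref{lm7} applies (with $k = 9$): it forces either the $4-HR$ condition to fail for $\kappa$ and $G$, or $\kappa$ to fail to be $1$-resistant on $G$. Either outcome contradicts our assumption. Next I would handle the remaining case $G \cong C_8$. Then Lemma~\ref{lm9} applies directly to the $9$-multicoloring $\kappa$ of $C_8$ and again yields that $4-HR$ fails or $\kappa$ is not $1$-resistant, contradicting the assumption.

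Since both cases lead to a contradiction, the assumption was false, so (i) or (ii) must hold, completing the proof. There is essentially no obstacle here: the content has been pushed entirely into Lemmas~\ref{lm7} and~\ref{lm9}, and the only thing to be careful about is noting that the hypothesis ``$k = 9$'' is what lets Lemma~\ref{lm9} cover the single excluded graph $C_8$ from Lemma~\ref{lm7}, so that together they exhaust all graphs on at most $8$ vertices.
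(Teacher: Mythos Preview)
Your proposal is correct and matches the paper's own proof exactly: the paper simply states that the claim follows from Lemmas~\ref{lm7} and~\ref{lm9}, which is precisely the case split you describe.
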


\begin{proof}
	The claim follows from Lemmas \ref{lm7} and \ref{lm9}.
\end{proof}

\begin{lemma}
	\label{lm11}Let $G$ be a graph with at most $12$ vertices and let
	$\kappa:V(G)\rightarrow\{1,...,9\}$ be a vertex $9$-multicoloring. One of the following holds:
	
	i) The $4-HR$ condition doesn't hold for $\kappa$ and $G$;
	
	ii) $\kappa$ is not $2$-resistant vertex $9$-multicoloring of $G$.
\end{lemma}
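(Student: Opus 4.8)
The statement has the same shape as Lemmas \ref{lm4}, \ref{lm7} and \ref{lm10}, and the plan is to prove it by the same kind of case analysis, now one step ``deeper''. I would argue that if the $4-HR$ condition holds for $\kappa$ and $G$ (otherwise (i) holds and there is nothing to prove), then one can exhibit a $2$-element set $A\subseteq V(G)$ such that no component of $G\backslash M(A)$ has all $9$ colors, so that (ii) holds. First observe that under $4-HR$ no set of at most $4$ vertices has all $9$ colors, so in particular no component on at most $4$ vertices does; hence if $G$ has no component on at least $5$ vertices then no component of $G$ has all colors and $\kappa$ is trivially not $2$-resistant. So from now on assume $G$ has a component on at least $5$ vertices, and split on $\Delta(G)$.

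For the case $\Delta(G)\geq3$, I would pick a vertex $v_{1}$ of maximum degree, so that $G_{2}:=G\backslash M(v_{1})$ has at most $12-4=8$ vertices, and apply Lemma \ref{lm10} to $G_{2}$ with the restriction of $\kappa$. Either the $4-HR$ condition fails for $G_{2}$ and $\kappa$ --- but a $4$-subset of $V(G_{2})$ with all colors is also such a subset of $V(G)$, so then $4-HR$ fails for $G$ and (i) holds --- or $\kappa$ is not $1$-resistant on $G_{2}$, i.e.\ there is $w_{2}\in V(G_{2})$ with no component of $G_{2}\backslash M_{G_{2}}(w_{2})$ having all colors. Since $G_{2}$ is an induced subgraph of $G$, one has $M_{G_{2}}(w_{2})=M_{G}(w_{2})\cap V(G_{2})$, hence $G\backslash M_{G}(\{v_{1},w_{2}\})=G_{2}\backslash M_{G_{2}}(w_{2})$, and $A=\{v_{1},w_{2}\}$ witnesses that $\kappa$ is not $2$-resistant. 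This case is essentially immediate once Lemma \ref{lm10} is in hand.

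For the case $\Delta(G)\leq2$, $G$ is a disjoint union of paths and cycles, and only its components on at least $5$ vertices matter; since $3\cdot5>12$, there are at most two of them. If there is exactly one, call it $H$ (a path or cycle on at most $12$ vertices, possibly $C_{8}$), a short direct check shows that two vertices $w_{1},w_{2}$ of $H$ can be chosen so that every component of $H\backslash M(\{w_{1},w_{2}\})$ has at most $4$ vertices --- for instance, on $P_{12}$ delete its $4$th and $9$th vertices, on $C_{12}$ delete two vertices whose closed neighborhoods are antipodal arcs, and similarly (more easily) for shorter paths and cycles --- and then $A=\{w_{1},w_{2}\}$ works because no component of $G\backslash M(A)$ has more than $4$ vertices. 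If there are two such components $H_{1},H_{2}$ with $v(H_{1})\leq v(H_{2})$, then $v(H_{1})\leq6$ and $v(H_{2})\leq7$, so neither is $C_{8}$ and Lemma \ref{lm7} applies to each: if $4-HR$ fails for one of them it fails for $G$ and (i) holds, and otherwise there are $w_{1}\in V(H_{1})$ and $w_{2}\in V(H_{2})$ with no component of $H_{i}\backslash M(w_{i})$ having all colors; since $H_{1},H_{2}$ are distinct components of $G$, $A=\{w_{1},w_{2}\}$ again works, the untouched components of $G$ all having at most $4$ vertices.

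I do not expect a genuine obstacle here; the lemma is bookkeeping in the style of the earlier ones. The points needing the most care are: in the last subcase, checking that the $C_{8}$-exclusion of Lemma \ref{lm7} is automatic (it is, because $v(H_{1})\leq6$ and $v(H_{2})\leq7$) and that deleting closed neighborhoods inside the two distinct components does not interfere; and, throughout, being precise that ``$\kappa$ is not $1$-resistant on the induced subgraph $G_{2}$'' upgrades to ``$\kappa$ is not $2$-resistant on $G$'', which relies on the identity $M_{G_{2}}(w)=M_{G}(w)\cap V(G_{2})$ for induced subgraphs and on the fact that failure of $4-HR$ passes upward from subgraphs (property iv).
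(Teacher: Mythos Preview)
Your proposal is correct and follows essentially the same approach as the paper's proof: split on whether $\Delta(G)\geq3$ (then remove one closed neighborhood and invoke Lemma~\ref{lm10} on the $\leq8$-vertex remainder) or $\Delta(G)\leq2$ (then treat the at most two large path/cycle components separately, using Lemma~\ref{lm7} when there are two and a direct two-vertex cut when there is one). You are in fact more careful than the paper in spelling out the $C_{8}$-exclusion for Lemma~\ref{lm7}, the upward inheritance of $4-HR$ failure, and the identity $M_{G_{2}}(w)=M_{G}(w)\cap V(G_{2})$; the paper's own proof leaves these implicit.
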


\begin{proof}
	Suppose to the contrary. If there is a vertex $x_{1}$ of degree at least $3$,
	then $G_{2}=G\backslash M(x_{1})$ has at most $8$ vertices and from
	Lemma \ref{lm10} it follows that there is a vertex $x_{2}$ such that no
	component of $G_{2}\backslash M(x_{2})=G\backslash M(\{x_{1},x_{2}\})$ has all
	the colors, and $\kappa$ is not $2$-resistant, which is a contradiction. Hence, $\Delta(G)\leq2$. Note that no
	component with at most $4$ vertices can have all the colors, hence we observe
	only components with $5$ or more vertices. If there are two components $H_{1}$
	and $H_{2}$ with at least $5$ vertices, then Lemma \ref{lm7} implies that
	there are vertices $x_{i}\in H_{i}$ such that no component of $H_{i}\backslash
	M(x_{i})$, $i=1,2$, has all the colors, which leads to a contradiction. If there is
	only one component $H$ with at least $5$ vertices, then there are vertices
	$x_{1},x_{2}\in H$ such that no component of $H\backslash M(\{x_{1},x_{2}\})$
	has more than
	\[
	\left\lceil \dfrac{12-2\cdot3}{2}\right\rceil =3
	\]
	vertices, which also leads to a contradiction.
\end{proof}

\begin{lemma}
	\label{lm12}Let $G$ be a graph with at most $16$ vertices and let
	$\kappa:V(G)\rightarrow\{1,...,9\}$ be a vertex $9$-multicoloring. One of the following holds:
	
	i) The $4-HR$ condition doesn't hold for $\kappa$ and $G$;
	
	ii) $\kappa$ is not $3$-resistant vertex $9$-multicoloring of $G$.	
\end{lemma}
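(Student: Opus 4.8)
The plan is to argue by contradiction, imitating the proofs of Lemmas \ref{lm10} and \ref{lm11}. Assume neither (i) nor (ii) holds, i.e. the $4-HR$ condition holds for $\kappa$ and $G$ and $\kappa$ is $3$-resistant. We split on $\Delta(G)$.

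If $\Delta(G)\geq3$, pick $x_1$ with $d(x_1)=\Delta(G)$. Then $G_2=G\backslash M(x_1)$ has at most $16-4=12$ vertices, and the $4-HR$ condition still holds for $\kappa$ and $G_2$ (it passes to subgraphs). Lemma \ref{lm11} then forces $\kappa$ not to be $2$-resistant in $G_2$, so there are vertices $x_2,x_3$ such that no component of $G_2\backslash M(\{x_2,x_3\})=G\backslash M(\{x_1,x_2,x_3\})$ has all the colors; hence $\kappa$ is not $3$-resistant in $G$, a contradiction.

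If $\Delta(G)\leq2$, then $G$ is a disjoint union of paths and cycles. Because $4-HR$ holds, no component on at most $4$ vertices can carry all $9$ colors, so only components on at least $5$ vertices matter, and there are at most $3$ of them. If there is none, $\kappa$ is trivially not $3$-resistant. If there are three, say $H_1,H_2,H_3$, then each has at most $16-10=6$ vertices, so Lemma \ref{lm10} (applicable since the $4-HR$ condition holds on each $H_i$) yields $w_i\in V(H_i)$ such that no component of $H_i\backslash M(w_i)$ has all the colors, and $\{w_1,w_2,w_3\}$ contradicts $3$-resistance. If there are exactly two, say $H_1,H_2$ with $v(H_1)\leq v(H_2)$, then $v(H_1)\leq8$ and $v(H_2)\leq11$; Lemma \ref{lm10} provides $w_1\in V(H_1)$ with no component of $H_1\backslash M(w_1)$ having all the colors, while, $H_2$ being a path or cycle, there are $w_2,w_3\in V(H_2)$ such that no component of $H_2\backslash M(\{w_2,w_3\})$ has more than $\left\lceil\frac{11-2\cdot3}{2}\right\rceil=3$ vertices, hence none has all $9$ colors; again $\{w_1,w_2,w_3\}$ contradicts $3$-resistance. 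Finally, if there is exactly one such component $H$, then $H$ is a path or cycle on at most $16$ vertices, and there are $w_1,w_2,w_3\in V(H)$ such that no component of $H\backslash M(\{w_1,w_2,w_3\})$ has more than $\left\lceil\frac{16-3\cdot3}{3}\right\rceil=3$ vertices, hence not all the colors, contradicting $3$-resistance once more.

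I do not expect a genuine obstacle here: the role of Lemmas \ref{lm9}--\ref{lm11} was precisely to set up this induction on the resistance parameter, and the arithmetic ($16-4=12$, the component bounds $6$, $11$, $16$, and the ceiling estimates) is routine. The one point that needs a little care is that in a path, as opposed to a cycle, the two or three closed neighborhoods must be placed away from the endpoints for the stated size bounds to hold; this is the same elementary placement argument already used in the earlier proofs.
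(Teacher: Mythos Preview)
Your proof is correct and follows essentially the same route as the paper: contradiction, split on $\Delta(G)$, reduce the $\Delta\geq3$ case to Lemma~\ref{lm11} via $G\backslash M(x_1)$, and handle the $\Delta\leq2$ case by counting components with at least $5$ vertices. The only cosmetic difference is in the two-component subcase: the paper invokes Lemma~\ref{lm11} on $H_2$ (at most $11$ vertices) to obtain $x_2,x_3$, whereas you use the direct path/cycle ceiling estimate $\lceil(11-6)/2\rceil=3$; both are valid, and in fact your citation of Lemma~\ref{lm11} in the $\Delta\geq3$ case is the intended one (the paper's text there reads ``Lemma~\ref{lm10}'', an evident typo).
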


\begin{proof}
	Suppose to the contrary. If there is a vertex $x_{1}$ of degree at least $3$,
	then $G_{2}=G\backslash M(x_{1})$ has at most $12$ vertices and from
	Lemma \ref{lm10}, it follows that there are vertices $x_{2}$ and $x_{3}$ such
	that no component of $G_{2}\backslash M(\{x_{2},x_{3}\})=G\backslash
	M(\{x_{1},x_{2},x_{3}\})$ has all the colors, which is a contradiction. Hence,
	$\Delta(G)\leq2$. Note that no component with at most $4$ vertices can have
	all the colors, hence we observe only components with $5$ or more vertices. If
	there are three components $H_{1}$, $H_{2}$ and $H_{3}$ with at least $5$
	vertices each, then Lemma \ref{lm10} implies that there are vertices $x_{i}\in
	H_{i}$ such that no component of $H_{i}\backslash M(x_{i})$, $i=1,2,3$, has
	all the colors, which leads to a contradiction. Suppose that there are exactly two
	components $H_{1}$ and $H_{2}$, with at least $5$ vertices each. Without the
	loss of generality, we may assume that $v(H_{1})\leq v(H_{2})$. Note that
	$H_{1}$ has at most $8$ vertices, and hence Lemma \ref{lm10} implies that
	there is a vertex $x_{1}\in V(H_{1})$ such that no component of $H_{1}%
	\backslash M(x_{1})$ has all the colors. $H_{2}$ has at most $11$ vertices, and
	Lemma \ref{lm11} implies that there are vertices $x_{2},x_{3}\in V(H_{2})$
	such that no component of $H_{2}\backslash M(\{x_{2},x_{3}\})$ has all the
	colors. If there is only one component, $H$, with at least $5$ vertices, then
	there are vertices $x_{1},x_{2},x_{3}\in V(H)$ such that no component of
	$H\backslash(\{x_{1},x_{2},x_{3}\})$ has more than
	\[
	\left\lceil \dfrac{16-3\cdot3}{3}\right\rceil =3
	\]
	vertices, which also leads to a contradiction.
\end{proof}

\begin{theorem}
	\label{tm13}$K(4,21)=10$.
\end{theorem}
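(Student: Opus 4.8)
The plan is to prove $K(4,21)\le 10$ and $K(4,21)\ge 10$ separately. For the upper bound it is enough to exhibit one graph $G$ on $21$ vertices with an explicit vertex $10$-multicoloring $\kappa$ and to verify the two defining properties: that no four vertices of $G$ carry all ten colors (the $4$-$HR$ condition), and that for every four-element $A\subseteq V(G)$ the graph $G\backslash M_{G}(A)$ has a component whose vertices realise $\{1,...,10\}$. One observation shapes the choice of $G$: since $4$-$HR$ forbids any four vertices from carrying all colors, a surviving component that carries all colors must contain at least five vertices. Hence the naive analogue of the $K(3,14)$-example, three disjoint copies of $C_{7}$, cannot work, since deleting $M$ of two vertices in one copy and one vertex in each of the other two shatters the graph into components on at most four vertices; the example $G$ must therefore be a more robustly connected $21$-vertex graph, which I would present in a figure. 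Once $G$ and $\kappa$ are fixed, the verification is a finite check, organised by the symmetry of $G$, that every deletion of four closed neighbourhoods still leaves a component with at least five vertices spanning all ten colors, and that every four vertices omit some color; one also notes that $\kappa$ is proper.

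For the lower bound, by Theorem~\ref{tm1}(2) it suffices to show that no graph on $21$ vertices admits a highly $4$-resistant vertex $9$-multicoloring: a highly $4$-resistant $\ell$-multicoloring with $\ell<9$ would extend to a $9$-multicoloring, and $\ell\le 4$ is impossible since then the $4$-$HR$ condition fails for every graph. So suppose $G$ has $21$ vertices and $\kappa:V(G)\rightarrow\mathcal{P}(\{1,...,9\})$ is $4$-resistant and satisfies $4$-$HR$. As before, $4$-$HR$ makes every component on at most four vertices color-deficient, so $4$-resistance forces, for every four vertices $A$, a component of $G\backslash M(A)$ with at least five vertices and all nine colors. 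Split on $\Delta(G)$. If $\Delta(G)\le 2$, then $G$ is a disjoint union of paths and cycles and, mimicking the proof of Theorem~\ref{tm8}, case 1, with $21$ in place of $20$ vertices, a case analysis on the number ($1,2,3$, or $4$) and sizes of the components with at least five vertices produces four vertices whose closed-neighbourhood deletion leaves only components on at most four vertices --- using Lemmas~\ref{lm7}, \ref{lm9}, \ref{lm10} for the large components that a single deletion cannot destroy --- contradicting $4$-resistance. If $\Delta(G)\ge 4$, choose $v_{1}$ with $d(v_{1})=\Delta(G)$; then $G\backslash M(v_{1})$ has at most $16$ vertices and still satisfies $4$-$HR$, so Lemma~\ref{lm12} produces $x_{2},x_{3},x_{4}$ with no component of $G\backslash M(\{v_{1},x_{2},x_{3},x_{4}\})$ carrying all colors, again a contradiction.

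The hard case, and the step I expect to demand most of the work, is $\Delta(G)=3$. For a degree-$3$ vertex $v_{1}$ the graph $G\backslash M(v_{1})$ has at most $17$ vertices, one more than Lemma~\ref{lm12} allows, and iterating the reduction on subcubic subgraphs the vertex counts run $21\to17\to13\to9$, each a unit past the reach of Lemmas~\ref{lm11}, \ref{lm12}, and the eight-vertex Lemma~\ref{lm10}. Moreover one cannot rescue this by a clean one-size-up analogue of Lemma~\ref{lm9}: a $9$-multicoloring of $C_{9}$ can be simultaneously $1$-resistant and satisfy $4$-$HR$ --- for instance assign $u_{i}$ the two colors indexing the distance-$3$ pairs $\{u_{i-3},u_{i}\}$ and $\{u_{i},u_{i+3}\}$, so that any six consecutive vertices realise all nine colors while any four realise at most eight. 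The approach is therefore to follow Theorem~\ref{tm8}, cases 2.1--2.3: run the reduction, but at each stage where the current graph has maximum degree at most $2$ finish it as a union of paths and cycles with the still-unused deletions (Lemmas~\ref{lm7}--\ref{lm10}); and when the cascade stalls because deleting a degree-$3$ neighbourhood always returns a union of $8$-cycles (handled for $9$ colors by Lemma~\ref{lm9}), or, at the last step, a single $8$- or $9$-cycle, use that $G$ is subcubic on exactly $21$ vertices, so the surviving cycle is joined by an edge to a deleted vertex; this exhibits a new degree-$3$ vertex breaking the exceptional shape and permits re-choosing the four deleted vertices so as to leave only color-deficient components. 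Turning these structural observations into a complete case analysis, in particular handling the configurations in which the terminal graph is, or contains, $C_{9}$, is the delicate point; with it in hand $K(4,21)\ge 10$, and combined with the construction $K(4,21)=10$.
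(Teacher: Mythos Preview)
Your outline has the right two-sided shape, but both directions are incomplete, and the upper-bound discussion contains a misleading inference.

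\textbf{Upper bound.} You do not actually give a construction; you only promise a figure. More importantly, your deduction that since three copies of $C_{7}$ fail the example ``must therefore be a more robustly connected $21$-vertex graph'' is wrong. The paper's $G$ is \emph{disconnected}: two copies of $C_{8}$ together with a path $P_{5}$, colored by $\kappa(v_{j}^{i})=\{j,j+3,9\}$ for $j$ odd and $\{j,j+3,10\}$ for $j$ even (indices modulo $8$). The point is not connectivity but component sizes: removing one closed neighbourhood from $C_{8}$ leaves five consecutive vertices, and five consecutive vertices already realise all ten colors. The five-vertex path, which itself realises all ten colors, forces the adversary to spend at least one of his four deletions there, so one of the two $C_{8}$'s sees at most one deletion and survives with a full-color path. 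The $4$-$HR$ verification is then a short parity/distance argument on $C_{8}$.

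\textbf{Lower bound.} Your handling of $\Delta(G)\le 2$ and $\Delta(G)\ge 4$ matches the paper (its cases (a)--(d) additionally peel off disconnected $G$, but that is cosmetic). The gap is $\Delta(G)=3$. You correctly identify the obstruction --- the iterated counts $21\to 17\to 13\to 9$ each overshoot Lemmas~\ref{lm12}, \ref{lm11}, \ref{lm10} by one, and your $C_{9}$ example shows that a direct analogue of Lemma~\ref{lm9} with nine colors genuinely fails --- but your proposed remedy (``the surviving cycle is joined by an edge to a deleted vertex; re-choose the four deleted vertices'') remains a sketch, and you yourself flag the $C_{9}$ configurations as unfinished. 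The paper does \emph{not} analyse the terminal $9$-vertex graph at all. Instead it shows that the nested hypotheses --- at each level, deleting the closed neighbourhood of any degree-$3$ vertex leaves a connected graph of maximum degree $3$ --- are structurally impossible for the original $21$-vertex $G$: a case analysis on $\delta(G)$ and on the adjacency pattern between degree-$2$ and degree-$3$ vertices always exhibits a degree-$3$ vertex whose closed-neighbourhood deletion disconnects $G$ (or $G_{2}$), contradicting an earlier-level assumption; the residual case in which the degree-$2$ and degree-$3$ vertices form a bipartition is killed by the equations $2n_{2}=3n_{3}$ and $n_{2}+n_{3}=21$, which have no integer solution. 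That global structural argument on $G$, not a local study of the $9$-vertex residue, is the missing idea.
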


\begin{proof}
	First, let us show that there exists a graph $G$ with $21$ vertices and a highly $4$-resistant vertex $10$-multicoloring $\kappa$ of $G$. Let $G$ be a graph that has three
	components; two cycles $H_{1}$ and $H_{2}$ with $8$ vertices and a path $H_{3}$
	with $5$ vertices. Let vertices of cycle $H_{i}$ be denoted by $v_{1}%
	^{i},...,v_{8}^{i}$, $i=1,2$, and let vertices of $H_{3}$ be denoted by
	$v_{1}^{3},...,v_{5}^{3}$, with the adjacency defined as in previous proofs. For $i=1,2,3$ let function $\kappa$ be defined by%
	\[
	\kappa(v_{j}^{i})=\left\{
	\begin{array}
	[c]{cc}%
	\{j,j+3,9\}, & j\text{ odd;}\\
	\{j,j+3,10\} & j\text{ even,}%
	\end{array}
	\right.
	\]

	where summation is given modulo 8. This way every vertex has two colors in
	$\{1,...,8\}$. This multicoloring is represented in Figure \ref{fig:4_21_10}.
	
		\begin{figure}[h]
		\centering\includegraphics[scale=0.6]{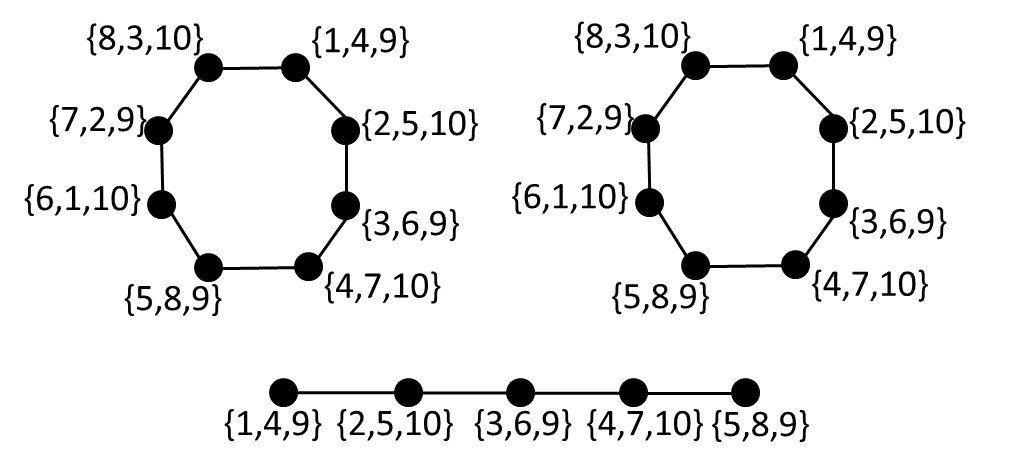}
		\caption{A graph $G$ with $21$ vertices and a highly $4$-resistant vertex $10$-multicoloring}
		\label{fig:4_21_10} \ \
	\end{figure}

	Let us prove that $4-HR$ holds for this $G$ and $\kappa$. Suppose to the contrary, that
	there is a set $A\subseteq V(G)$ of $4$ vertices that has all $10$ colors. Without the
	loss of generality we may assume that all the vertices in $A$ are vertices in
	$H_{1}$, since $\kappa$ does not depend on the component. Each vertex has $2$
	colors in $\{1,...,8\}$, hence all these colors must be different. Let us
	distinguish three cases.
	
	1) There are two vertices in $A$ on distance $3$.
	
	They both have the same color in $\{1,...,8\}$ which is a contradiction.
	
	2) There are no two vertices on distance $3$ in $A$, but there are two
	vertices on the distance $1$.
	
	Without the loss of generality, we may assume that these vertices are
	$v_{1}^{1}$ and $v_{2}^{1}$. Hence, $v_{4}^{1},v_{5}^{1},v_{6}^{1},v_{7}%
	^{1}\notin A$. Therefore, $v_{3}^{1},v_{8}^{1}\in A$, but this is a
	contradiction, because these two vertices are on the distance $3$.
	
	3) All vertices in $A$ are on even distances.
	
	In this case, either none of the vertices has the color $9$ or none of them has the color
	$10$. In both cases contradiction is obtained.
	
	So $4-HR$ holds for defined $\kappa$ and $G$. In order to prove $\kappa$ is highly $4$-resistant it remains to prove it is $4$-resistant.
	
	Suppose to the contrary, that there is a set
	$A$ of $4$-vertices such that $G\backslash M(A)$ has no component with all the colors. 
	It is easy to see that at least one of the vertices in $A$ is in $H_{3}%
	$. Hence, there are at most $3$ vertices in $A$ in $H_{1} \cup H_{2}$.
	Without the loss of generality, we may assume that there is at most one vertex
	in $A$ in $H_{1}$ and let us denote it by $w$. It can be easily seen that
	$H_{1}\backslash M(w)$ has all the colors which is a contradiction. Hence indeed
	a graph $G$ with $21$ vertices and a highly $4$-resistant vertex $10$-multicoloring exists.\\
	
	It remains to prove that a highly $4$-resistant vertex $9$-multicoloring doesn't exist for any grapf with $21$ vertices. Suppose to the contrary, that
	there is a graph $G$ with $21$ vertices and a highly $4$-resistant multicoloring $\kappa$ of vertices of $G$ with $9$ colors. Let us distinguish $5$ cases.
	
	a) $G$ has at least one component with at most $4$ vertices.
	
	This component, $H_{1}$, does not have all the colors, hence we observe
	$G\backslash H_{1}$. Now contradiction follows from Theorem \ref{tm8}.
	
	b) $G$ is not connected, but all the components have at least $5$ vertices.
	
	Note that there are at most $4$ components. If there are $4$ components
	$H_{i}$, $i=1,...,4$ then none of these components has more than $6$ vertices
	and hence from Lemma \ref{lm10} and $4-HR$ it follows that there is a vertex
	$w_{i}\in H_{i}$ for each $i=1,...,4$, such that no component of
	$H_{i}\backslash M(w_{i})$ has all the colors, which leads to a contradiction.
	
	If there are three components, $H_{1},H_{2}$ and $H_{3}$, then at least two of
	them (say $H_{1}$ and $H_{2}$) have at most $7$ vertices each and the largest
	of them (say $H_{3}$) has at most $11$ vertices. From $4-HR$ and Lemma \ref{lm10}
	it follows that there are vertices $w_{i}\in V(H_{i})$ such that no component of
	$H_{i}\backslash M(w_{i})$, $i=1,2$, has all the colors; and from Lemma
	\ref{lm11} and $4-HR$ it follows that there are vertices $w_{3},w_{4}\in V(H_{3})$
	such that no component of $H_{3}\backslash M(\{w_{3},w_{4}\})$ has all the colors.
	
	If there are two components, the contradiction is obtained analogously.
	
	c) $G$ is connected and $\Delta(G)\leq2$.
	
	In this case $G$ is either a cycle or a path. In each case there are four
	vertices $w_{1},...,w_{4}$ such that no component of $G\backslash
	M(\{w_{1},...,w_{4}\})$ has more than
	\[
	\left\lceil \dfrac{21-3\cdot4}{4}\right\rceil =3
	\]
	vertices and this leads to a contradiction.
	
	d) $G$ is connected and $\Delta(G)\geq4$.
	
	Let $w_{1}$ be a vertex of degree at least $4$. Then, graph $G\backslash
	M(w_{1})$ has at most $16$ vertices and from $4-HR$ and Lemma \ref{lm12} it
	follows that there are vertices $w_{2},w_{3},w_{4}\in V(G\backslash M(w_{1}))$
	such that no component of $G\backslash M(\{w_{1},...,w_{4}\})$ has all the
	colors which is a contradiction.
	
	e) $G$ is connected and $\Delta(G)=3$.
	
	Let us distinguish 4 subcases.
	
	e.1) There is a vertex $w_{1}$ such that $d_{G}(w_{1})=3$, $G\backslash M(w_{1})$ is
	disconnected and it has a component with at most $4$ vertices.
	
	Let us denote by $H_{1}$ the component with at most $4$ vertices. Obviously $H_{1}$
	doesn't have all the colors. Note that $G\backslash(M(w_{1})\cup V(H_{1}))$ has
	at most $16$ vertices. Now contradiction follows from Lemma \ref{lm12}.
	
	e.2) There is a vertex $w_{1}$ such that $d_{G}(w_{1})=3$,
	$G\backslash M(w_{1})$ is disconnected and each component has at least $5$ vertices.
	
	Since $G\backslash M(w_{1})$ has at most $17$ vertices, there are at most
	three components. If there are $3$ components $H_{1},H_{2}$ and $H_{3}$, then
	no component has more than $7$ vertices and therefore from Lemma
	\ref{lm10} and $4-HR$ it follows that there are vertices $w_{i+1}\in V(H_{i})$ such that
	no component of $H_{i}\backslash M(w_{i+1})$ has all the colors, which leads to a contradiction.
	
	If there are two components, $H_{1}$ and $H_{2}$, then, the smaller component
	(say $H_{1}$) has at most $8$ vertices and the larger component has at most
	$12$ vertices. Now contradiciton follows from Lemma \ref{lm10} and Lemma \ref{lm11}.
	
	e.3) There is a vertex $w_{1}$ such that $d_{G}(w_{1})=3$, graph
	$G\backslash M(w_{1})$ is connected and $\Delta(G\backslash M(w_{1}))=2$.
	
	Note that $G\backslash M(w_{1})$ is either a cycle or a path. In each case
	there are vertices $w_{2},w_{3},w_{4}\in V(G\backslash M(w_{1}))$ such that no
	component of $[G\backslash M(w_{1})]\backslash M(\{w_{2},w_{3},w_{4}\})$ has
	more than
	\[
	\left\lceil \frac{17-3\cdot3}{3}\right\rceil =2
	\]
	vertices, and this leads to a contradiction.
	
	e.4) For each vertex $w_{1}$ such that $d_{G}(w_{1})=3$, graph
	$G\backslash M(w_{1})$ is connected and $\Delta(G\backslash M(w_{1}))=3$.
	
	Let us denote $G_{2}=G\backslash M(w_{1})$ and let us note that it has $17$
	vertices. We distinguish four subcases.
	
	e.4.1) There is a vertex $w_{2}$ such that $d_{G_{2}}(w_{2})=3$,
	$G_{2}\backslash M(w_{2})$ is disconnected and it has a component with at most
	$4$ vertices.
	
	Let $H_{1}$ be a component that has at most $4$ vertices. It does not have all
	the colors. Graph $G_{2}\backslash\left(  M(w_{2})\cup H_{1}\right)  $ has at
	most $12$ vertices. Hence, from Lemma \ref{lm11} there are vertices $w_{3},w_{4}\in V(G_{2}%
	\backslash\left(  M(w_{2})\cup H_{1}\right)  )$ such that no component of
	$[G_{2}\backslash\left(  M(w_{2})\cup H_{1}\right)  ]\backslash M(\{w_{3}%
	,w_{4}\})$ has all the colors, which leads to a contradiction.
	
	e.4.2) There is a vertex $w_{2}$ such that $d_{G_{2}}(w_{2})=3$,
	$G_{2}\backslash M(w_{2})$ is disconnected and each of its components has at
	least $5$ vertices.
	
	Note that $G_{2}\backslash M(w_{2})$ has $13$ vertices, hence it has $2$
	components and each component has at most $8$ vertices. $4-HR$ and Lemma
	\ref{lm10} imply that there are vertices $w_{i+2}\in V(H_{i})$, $i=1,2$, such
	that no component of $H_{i}\backslash M(w_{i+2})$ has all the colors. This leads
	to a contradiction.
	
	e.4.3) There is a vertex $w_{2}$ such that $d_{G_{2}}(w_{2})=3$,
	$G_{2}\backslash M(w_{2})$ is connected and $\Delta(G_{2}\backslash
	M(w_{2}))=2$.
	
	In this case, $G_{2}\backslash M(w_{2})$ is a cycle or a path. In each case
	there are vertices $w_{3},w_{4}$ such that $[G_{2}\backslash M(w_{2}%
	)]\backslash M(\{w_{3},w_{4}\})$ has at most
	\[
	\left\lceil \frac{13-2\cdot3}{2}\right\rceil =4
	\]
	vertices, which leads to a contradiction.
	
	e.4.4) For each vertex $w_{2}$ such that $d_{G_{2}}(w_{2})=3$, graph
	$G_{2}\backslash M(w_{2})$ is connected and $\Delta(G_{2}\backslash
	M(w_{2}))=3$.
	
	Let us denote $G_{3}=G_{2}\backslash M(w_{2})$ and let us note that $G_{3}$
	has $13$ vertices. We distinguish two subcases.
	
	e.4.4.1) There is a vertex $w_{3}$ such that $d_{G_{3}}(w_{3})=3$ and
	$G_{3}\backslash M(w_{3})$ is not connected.
	
	Note that $G_{3}\backslash M(w_{3})$ has $9$ vertices. Hence, each component,
	except possibly the largest one, has at most $4$ vertices and therefore it cannot have all the colors. The largest component, $H$, has at most $8$ vertices.
	Hence, the contradiction follows from Lemma \ref{lm7}. 
	
	e.4.4.2) For each vertex $w_{3}$ such that $d_{G_{3}}(w_{3})=3$, graph
	$G_{3}\backslash M(w_{3})$ is connected.
	
	Let us denote $G_{4}=G_{3}\backslash M(w_{3})$ and let us note that $G_{4}$
	has $9$ vertices. We distinguish $5$ subcases.
	
	e.4.4.2.1) $\delta(G)=1$.
	
	Let $v_{1}$ be a vertex of degree $1$ and let $v_{2}$ be its only neighbor.
	Note that $d_{G}(v_{2})\geq2$. If $d_{G}(v_{2})=2$ let $v_{1},v_{2},...,v_{k}$
	be a path such that $d(v_{1}),...,d(v_{k-1})<3$ and $d(v_{k})=3$. Let us
	denote $w_{1}=v_{k}$. Then $G\backslash M(w_{1})$ is disconnected which is a
	contradiction with the assumptions of this subcase. Hence, let us assume that
	$d_{G}(v_{2})=3$ and let us denote $N_{G}(v_{2})=\{v_{1},p_{1},q_{1}\}$. If
	either one of vertices $p_{1}$ or $q_{1}$ has the degree $3$ (say $p_{1}$),
	then $G\backslash M(p_{1})$ is not connected which is in contradiction with
	the assumptions of this case. Hence, let $p_{1},...,p_{k}$ ($p_{2}\neq v_{2}$)
	be a path such that $d(p_{1}),...,d(p_{k-1})<3$ and $d(p_{k})=3$. Let us
	denote $w_{1}=p_{k}$. If $v_{2}\in N_{G}(w_{1})$ or $q_{1}\in N_{G}(w_{1})$,
	graph $G_{2}=G\backslash M(w_{1})$ is disconnected which is a contradiction.
	Note that $d_{G_{2}}(v_{2})=2$. Let $v_{1},v_{2},q_{1},q_{2},...,q_{l}$ be a
	path such that $d_{G_{2}}(q_{2}),...,d_{G_{2}}(q_{l-1})<3$ and $d_{G_{2}%
	}(q_{l})=3$. We denote $w_{2}=q_{l}$. Graph $G_{2}\backslash M(w_{2})$ is
	disconnected which is in contradiction with the assumptions of this case.
	
	e.4.4.2.2) $\delta(G)\geq2$ and there are two adjacent vertices of
	degree $2$.
	
	Let us assume that there are two adjacent vertices $v_{1}$ and $v_{2}$ of
	degree $2$. Let $v_{1},...,v_{k}$ be a path such that $d_{G}(v_{1}%
	),...,d_{G}(v_{k-1})<3$ and $d_{G}(v_{k})=3$. Let us distinguish $3$ subcases.
	
	e.4.4.2.2.1) $v_{1}v_{k}\notin E(G)$.
	
	Let us denote $w_{1}=v_{k}$ and $G_{2}=G\backslash M(w_{1})$. Note that
	$d_{G_{2}}(v_{k-2})=1$. Now, we can proceed analogously as in the subcase
	e.4.4.2.1 just observing $G_{2}$ instead of $G_{1}$.
	
	e.4.4.2.2.2) $v_{1}v_{k}\in E(G)$.
	
	If $k\neq3$ let us choose $w_{1}=v_{k}$. Note that $G\backslash M(w_{1})$ is
	disconnected which is a contradiction with the assumptions of this case.
	Hence, $k=3$. Let us denote $N_{G}(v_{3})=\{v_{1},v_{2},p_{1}\}$. If
	$d_{G}(p_{1})=3$, then the choice $w_{1}=p_{1}$ implies that $G\backslash
	M(w_{1})$ is disconnected, which is a contradiction. Suppose otherwise and let
	$p_{1},p_{2},...,p_{l}$ be a walk such that $d_{G}(p_{1}),...,d_{G}%
	(p_{l-1})<3$ and $d_{G}(p_{l})=3$. Then, the choice $w_{1}=p_{l}$ implies that
	$G\backslash M(w_{1})$ is disconnected, which is a contradiction.
	
	e.4.4.2.3) $\delta(G)\geq2$ and there are two adjacent vertices of
	degree $3$.
	
	Let us distinguish two subcases.
	
	e.4.4.2.3.1) $\delta(G)\geq2$ and there is a vertex $v_{2}$ of degree
	$3$ that is adjacent to at least one vertex of degree $3$ and at least one
	vertex of degree $2$.
	
	Let us denote the adjacent vertex of degree $2$ by $v_{1}$ and vertex of
	degree $3$ by $v_{3}$. Let us distinguish two subcases:
	
	e.4.4.2.3.1.1) $v_{1}v_{3}\notin E(G)$.
	
	Let us choose $w_{1}=v_{3}$ and denote $G_{2}=G\backslash M(w_{1})$. Note that
	$d_{G_{2}}(v_{1})=1$. Now we can proceed analogously as in the subcase
	e.4.4.2.1 just observing $G_{2}$ instead of $G_{1}$.
	
	e.4.4.2.3.1.1.2) $v_{1}v_{3}\in E(G)$.
	
	Let $N(v_{2})=\{v_{1},v_{3},p_{1}\}$. If $d_{G}(p_{1})=3$, then we choose
	$w_{1}=p_{1}$ and observe $G_{2}=G\backslash M(p_{1})$. If $p_{1}v_{3}\in
	E(G)$, then $G_{2}$ is disconnected which is a contradiction. In the opposite
	case $d_{G_{2}}(v_{1})=1$ and we can proceed analogously as in the subcase
	e.4.4.2.1 just observing $G_{2}$ instead of $G_{1}$.
	
	e.4.4.2.3.2) $\delta(G)\geq2$ and there is no vertex $v_{2}$ of degree
	$3$ that is adjacent to at least one vertex of degree $3$ and at least one
	vertex of degree $2$.
	
	Let us partition vertices of $G$ in three classes: class $A$ of vertices of
	degree $3$ that are adjacent to at least one vertex of degree $2$, class $B$
	of vertices of degree $3$ that are not adjacent to any vertex of degree $2$,
	and class $C$ of vertices of degree $2$. Note that no two vertices in class
	$A$ are adjacent and that no vertex in $A$ is adjacent to any vertex in $B$.
	Hence, we may assume that only adjacent vertices of degree $3$ are in $B$.
	Therefore, $B$ is non-empty and none of its vertices are adjacent to any
	vertex in $A$ or $C$. Since $G$ is connected, it must hold that $A$ and $C$
	are empty, but then the graph has $21$ vertex of degree $3$, which is in
	contradiction with handshaking Lemma which says that the number of vertices of
	odd degree in a graph is even.
	
	e.4.4.2.4) $\delta(G)\geq2$, $G$ is bipartite graph with vertices of
	degree $2$ in one partition and vertices of degree $3$ in the other partition.
	
	Let $n_{i}$, $i=2,3$, be the number of vertices of degree $i$. It must hold
	$2n_{2}=3n_{3}$ and $n_{2}+n_{3}=21$. Solving this, we get $n_{2}=63/5$ which
	is obviously a contradiction.
	
	All the cases are analyzed and the Theorem is proved.
\end{proof}

\section*{Conclusions}
We proposed a type of vertex multicoloring that could be used to model a securing of a secret situation. The multicoloring is named highly $a$-resistant vertex $k$-multicoloring, for $a,k\in\mathbb{N}$, and for given $a=1,2,3,4$, we have analyzed what is the minimal number of vertices a graph must have in order to admit a highly $a$-resistant vertex $k$-multicoloring and what is the minimal number of colors for which the coloring is achieved.
Further work includes determining minimal numbers of vertices and colors for larger number $a$ or changing the conditions of this multicoloring, motivated by different situations, resulting in new multicolorings and exploring their properties.

\section{Acknowledgements}
Partial support of the Croatian Ministry of Science and Education is gratefully acknowledged. VZ a acknowledge the Israel-Italian collaborative project NECSTas well as support by the H2020 CSA Twinning project No. 692194.

\end{document}